\documentclass[a4paper,10pt]{article}

\usepackage[margin=30truemm]{geometry}
\usepackage{graphicx}

\usepackage{latexsym}

\usepackage{amsmath,amssymb,amsthm,bbm,amscd}

\usepackage{amsfonts}
\usepackage{bm}
\usepackage{fancybox}
\usepackage{empheq}
\usepackage{mathrsfs}
\usepackage{mathtools}
\usepackage{mleftright}
\usepackage{arydshln}
\usepackage{multirow}

\usepackage{url}

\usepackage[style=alphabetic, backend=biber,maxbibnames=99]{biblatex}
\addbibresource{mybibliography.bib}
\setcounter{biburllcpenalty}{7000}
\setcounter{biburlucpenalty}{8000}
\setcounter{biburlnumpenalty}{8000}

\usepackage{bbm}
\mleftright
\usepackage{tikz}
\theoremstyle{definition}

\allowdisplaybreaks
\renewcommand{\Re}{\operatorname{Re}}

\DeclareMathOperator{\disc}{Disc}
\DeclareMathOperator{\SF}{SF}

\theoremstyle{plain}
\newtheorem{thm}{Theorem}[section]
\newtheorem{prop}[thm]{Proposition}

\usepackage{hyperref}

\title{Secondary terms in the distribution of genus numbers of cubic fields}
\author{Tatsuya Yamada}
\date{\today}

\begin{document}

\maketitle

\begin{abstract}
        We prove the existence of secondary terms of order $X^{5/6}$ in the asymptotic formulas for the average size of the genus number of cubic fields and for the number of cubic fields with a given genus number, establishing improved error estimates. These results refine the estimates obtained by McGown and Tucker. We also provide uniform estimates for the moments of the genus numbers of cubic fields. 
\end{abstract}

\section{Introduction}\label{sec1}
The genus field of a number field $F$ is defined to be the maximal extension $F^\ast$ of $F$ that is unramified at all finite primes and also is a compositum of the form $Fk^\ast$ where $k^\ast$ is absolutely abelian. The genus number of $F$ is defined as $g_F = \left[ F^\ast \colon F \right]$. The genus number $g_F$ is a divisor of the narrow class number $h_F^+$, thus it is an interesting object in number theory. When $F$ is a cubic field, it is known that $g_F$ is a power of $3$ (see Theorem~\ref{genus}). 

McGown and Tucker \cite{mt} obtained the average size of the genus number for cubic fields. In their proof, they used as an ingredient a uniform estimate for the error term in the count of cubic fields given by Taniguchi and Thorne \cite{tt}. Recently, this uniform estimate was significantly improved by Bhargava, Taniguchi and Thorne \cite{btt}. The purpose of this paper is to improve the results of McGown and Tucker \cite{mt} using \cite{btt}, and in particular, to demonstrate the existence of the second order main terms. 

For simplicity, set
\begin{equation*}
    a_p \coloneqq \frac{1}{p \left( p+1 \right)}, \quad 
    b_p \coloneqq \frac{1}{p^2 + p^{4/3} + p + p^{2/3}}. 
\end{equation*}
Then we will prove:

\begin{thm}\label{mtkairyou3}
    We have
    \begin{equation}
        \sum_{\substack{ \left[ F \colon \mathbb{Q} \right] = 3 \\ 0< \pm \disc \left( F \right) < X}} g_F
        = \alpha^{\pm} X + \beta^{\pm} X^{5/6} + O_\varepsilon \left( X^{2/3 + \varepsilon} \right),
    \end{equation}
    where the sum is taken over isomorphism classes of cubic fields $F$ with $0< \pm \disc \left( F \right) < X$, and
    \begin{align*}
        \alpha^{\pm} &\coloneqq
        \frac{119 C^{\pm}}{1404 \zeta \left( 2 \right)}
        \prod_{p}
        \left( 1 + n_p a_p \right),\\
        \beta^{\pm} &\coloneqq
        \frac{2 \left( 55 \sqrt[3]{9} -19 \right) K^{\pm} \zeta \left( 1/3 \right)}{45 \left( 3\sqrt[3]{9} - 1 \right) {\Gamma \left( 2/3 \right)}^3}
        \prod_p \left( 1- \frac{p^{1/3} +1}{p \left( p+1 \right)} \right)
        \left( 1 + n_p b_p \right).
    \end{align*}
    Here, $C^{+} \coloneqq 1,C^{-} \coloneqq 3,K^{+} \coloneqq 1$, and $K^{-}  \coloneqq \sqrt{3}$, the products are taken over all primes $p$, and $n_p$ is $3$ when $p \equiv 1 \pmod{3}$ and $1$ otherwise.
\end{thm}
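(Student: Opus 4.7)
The plan is to express $g_F$ as a product of local factors, insert this decomposition into the sum, swap the order of summation, and invoke the uniform Bhargava--Taniguchi--Thorne count of cubic fields with local conditions \cite{btt}. By Theorem~\ref{genus}, for any cubic field $F$ one has $g_F = \prod_p g_{F,p}$, where each $g_{F,p}$ is a power of $3$ depending only on the \'etale $\mathbb{Q}_p$-algebra $F \otimes \mathbb{Q}_p$, and $g_{F,p} = 1$ whenever $p$ is unramified in $F$. The fact that $n_p = 3$ precisely when $p \equiv 1 \pmod 3$ then reflects which tame ramified completions contribute a non-trivial local genus factor.

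For a finite set $S$ of primes and local conditions $\Sigma = (\Sigma_p)_{p \in S}$, the estimate of \cite{btt} provides, uniformly in $\Sigma$,
\[
N_3^\pm(X;\Sigma) = A^\pm(\Sigma)\, X + B^\pm(\Sigma)\, X^{5/6} + O_\varepsilon\!\left( \mathrm{cond}(\Sigma)^\varepsilon X^{2/3+\varepsilon} \right),
\]
with $A^\pm$ and $B^\pm$ multiplicative in the individual $\Sigma_p$. I would insert $g_F = \prod_p g_{F,p}$ into $\sum_F g_F$, group fields by their local types at each prime $p \in S$, and collapse the sum over each local factor to an Euler factor. A direct case analysis over the cubic \'etale $\mathbb{Q}_p$-algebras (split, inert, partially split, and the tame ramified types) should evaluate these factors to $1 + n_p a_p$ for the leading term and to $\bigl(1 - (p^{1/3}+1)/(p(p+1))\bigr)(1 + n_p b_p)$ for the secondary term. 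The wildly ramified prime $p = 3$ must be handled separately by enumerating the finitely many cubic \'etale $\mathbb{Q}_3$-algebras and weighting each by its genus contribution; together with the global normalizations of the Davenport--Heilbronn and BTT main terms (coming from the Shintani zeta function), this is what produces the constants $119/1404$, the factor $1/\zeta(2)$, and the coefficient $2(55\sqrt[3]{9}-19)/(45(3\sqrt[3]{9}-1))$.

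The main obstacle will be controlling the truncation error incurred by replacing the full Euler product by its restriction to a finite set $S$. I would take $S$ to consist of all primes up to a parameter $Y = Y(X)$, bound the tail from primes $p > Y$ by an elementary uniform estimate for cubic fields of discriminant less than $X$ divisible by $p^2$ (of size $O_\varepsilon(X^{1+\varepsilon}/p^2)$, summed against the local weights), and separately control the accumulated $X^{2/3+\varepsilon}$-type error from \cite{btt} over all $\Sigma$ of conductor at most a fixed power of $Y$. Since the Bhargava--Taniguchi--Thorne error is polynomially sharp in the conductor, one expects a modest choice such as $Y = X^{\delta}$ to balance the two sources of error and yield the claimed bound $O_\varepsilon(X^{2/3+\varepsilon})$; the absolute convergence of the resulting Euler products for $\alpha^\pm$ and $\beta^\pm$ is then immediate from $a_p, b_p = O(p^{-2})$.
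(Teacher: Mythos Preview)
Your strategy—write $g_F$ as a product of local contributions and feed these into the Bhargava--Taniguchi--Thorne count—is the same idea the paper uses, but your proposed error analysis has a genuine gap.

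The problem is the balancing step. Even granting that each individual specification $\Sigma$ supported on totally-ramified conditions has BTT error $O_\varepsilon(X^{2/3+\varepsilon})$, after you expand $\prod_{p\le Y} g_{F,p}$ you must sum this error over roughly $3^{\#\{p\le Y:\,p\equiv 1\ (\mathrm{mod}\ 3)\}}$ weighted choices of local data, while the tail from primes $p>Y$ contributes $\gg X^{1+\varepsilon}/Y$. No choice of $Y$ balances these to $O_\varepsilon(X^{2/3+\varepsilon})$: taking $Y$ a power of $X$ makes the accumulated error super-polynomial, and taking $Y$ polylogarithmic leaves a tail of size $X^{1-o(1)}$. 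The individual error bound you quote, $O_\varepsilon(\operatorname{cond}(\Sigma)^\varepsilon X^{2/3+\varepsilon})$, is not what \cite{btt} proves, and in any case it would not save you from the exponential number of terms.

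The paper sidesteps this entirely by using a different input from \cite{btt}. Instead of truncating, it parametrizes cubic fields by the squarefree integer $f=f_F$ (the product of primes $\neq 3$ totally ramified in $F$), so that for non-cyclic $F$ one has $g_F=3^{\psi(f_F)}$ times a factor depending only on the local algebra at $3$. The crucial tool is then Theorem~1.4 of \cite{btt} (Theorem~\ref{btt+a2} here), which bounds the \emph{sum} $\sum_f |E(X,\Sigma^f)|$ over \emph{all} totally-ramified specifications simultaneously by $O_\varepsilon(X^{2/3+\varepsilon})$, not merely each term. Since any contributing $f$ satisfies $f^2<X$, the weights obey $3^{\psi(f)}\le 3^{\omega(f)}\ll_\varepsilon X^\varepsilon$, so the weighted error sum is still $O_\varepsilon(X^{2/3+\varepsilon})$ and no truncation parameter is needed. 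Two smaller points you gloss over: the factorization $g_F=\prod_p g_{F,p}$ fails for cyclic cubics (Theorem~\ref{genus} has an extra $3^{-1}$ there, though this contribution is $O(X^{1/2+\varepsilon})$ and harmless); and the local factor at $3$ is not determined by the splitting type alone—one must isolate the finer subset $\mathscr{T}_3^{(1)}\subset\mathscr{T}_3$ of totally ramified $\mathbb{Q}_3$-algebras with $\disc_3(F_3)=3^4$ and the correct sign of $\disc(F_3)/\disc_3(F_3)$.
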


In \cite{mt}, they obtained the primary term $\alpha^{\pm} X$ with the error term estimate $O_\varepsilon \left( X^{16/17 + \varepsilon} \right)$. They also studied the number of cubic fields with given genus number. We improve the result also. 
For a squarefree integer $f$, we set $a_f \coloneqq \prod_{p \mid f} a_p$ and $b_f \coloneqq \prod_{p \mid f} b_p$. 
Let $\SF_k$ denote the set of all squarefree positive integers which are coprime to $3$ and have exactly $k$ prime factors $p$ satisfying $p \equiv 1 \pmod{3}$. 
For an integer $k \ge 0$, we set
\begin{equation*}
    A_k \coloneqq \sum_{f \in \SF_k} a_f, \quad
    B_k \coloneqq \sum_{f \in \SF_k} b_f,
\end{equation*}
where the bounds $a_f, b_f \le f^{-2}$ assure the convergences of these infinite sums. 
We show:

\begin{thm}\label{mtkairyou1}
    For an integer $k \ge 0$, let $N_k^{\pm} \left( X \right)$ denote the number of isomorphism classes of cubic fields $F$ satisfying $0 < \pm \disc \left( F \right) < X$ and $g_F = 3^k$. Then
    \begin{equation}
        N^{\pm}_k \left( X \right) = \alpha_k^{\pm} X 
        + \beta_k^{\pm} X^{5/6} + E_k^{\pm} \left( X \right), 
    \end{equation}
    where
    \begin{align*}
        \alpha_k^{\pm}
        &\coloneqq
        \frac{C^{\pm}}{324 \zeta \left( 2 \right)} 
        \left( 29 A_k
        + \frac{1}{4} A_{k-1} \right),\\
        \beta_k^{\pm}
        &\coloneqq
        \frac{4 \zeta \left( 1/3 \right) K^{\pm}}{45 \left( 11 \sqrt[3]{9} - 3 \right) {\Gamma \left( 2/3 \right)}^3}
        \prod_p \left( 1- \frac{p^{1/3} +1}{p \left( p+1 \right)} \right) \left( \left( 107 \sqrt[3]{9} -35 \right)
        B_k + \left( \sqrt[3]{9} -1 \right)
        B_{k-1} \right),
    \end{align*}
    with the following bound on the sum of the error terms $E^{\pm}_k \left( \Sigma \right)$:
    \begin{equation}
        \sum_{k \ge 0} \left| E_k^{\pm} \left( X \right) \right| \ll_\varepsilon X^{2/3 + \varepsilon}.
    \end{equation}
\end{thm}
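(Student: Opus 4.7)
The plan is to partition $N_k^{\pm}(X)$ according to the ramification data determining the genus number, apply the uniform Bhargava--Taniguchi--Thorne (BTT) estimate of \cite{btt} to count cubic fields with prescribed local conditions, and sum over all such data. By Theorem~\ref{genus}, the genus number $g_F = 3^k$ of a cubic field $F$ is determined by the local ramification of $F$ at the prime $3$ and at primes $p \equiv 1 \pmod{3}$. If $f$ denotes the squarefree product of primes $p \equiv 1 \pmod{3}$ at which $F$ is ramified, then $k = \omega(f) + \delta$ with $\delta \in \{0,1\}$ determined by the behavior at $3$. Accordingly,
\begin{equation*}
N_k^{\pm}(X) = \sum_{f \in \SF_k} N_0^{\pm}(f;X) + \sum_{f \in \SF_{k-1}} N_1^{\pm}(f;X),
\end{equation*}
where $N_\delta^{\pm}(f;X)$ counts cubic fields $F$ with $0 < \pm \disc(F) < X$ satisfying the prescribed local conditions at the primes of $3f$ (the primes $\equiv 2 \pmod 3$ are unconstrained, and the ``exactly ramified at $f$'' condition among primes $\equiv 1 \pmod 3$ is enforced via a M\"obius-type inclusion-exclusion).

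Each $N_\delta^{\pm}(f;X)$ is a cubic field count with local conditions at a finite set of primes, and \cite{btt} supplies a uniform asymptotic
\begin{equation*}
N_\delta^{\pm}(f;X) = c_\delta^{\pm}(f)\, X + d_\delta^{\pm}(f)\, X^{5/6} + E_\delta^{\pm}(f;X),
\end{equation*}
in which $c_\delta^{\pm}(f)$ and $d_\delta^{\pm}(f)$ factor as products of local densities involving $a_p$ and $b_p$ at the primes $p \mid f$, together with a local factor at $3$ depending on $\delta$, an archimedean factor, and the familiar global constants. Substituting these formulas and summing over $f$ using $A_k = \sum_{f \in \SF_k} a_f$ and $B_k = \sum_{f \in \SF_k} b_f$ yields $\alpha_k^{\pm} X + \beta_k^{\pm} X^{5/6}$. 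The rational constants $29, \tfrac{1}{4}$ and the algebraic constants $107\sqrt[3]{9}-35, \sqrt[3]{9}-1$ emerge from combining the local densities at $3$ in the two cases $\delta \in \{0,1\}$ with the archimedean factors $C^{\pm}$, $K^{\pm}$ and the global factors $\zeta(2)^{-1}$ and $\zeta(1/3)/\Gamma(2/3)^3$ that are standard in cubic field asymptotics.

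For the error, total ramification at the primes of $f$ forces $f^2 \mid \disc(F)$, so only $f \leq X^{1/2}$ contribute. The BTT estimate, being uniform in the local conditions with an error bound of the form $O_\varepsilon(X^{2/3+\varepsilon})$ that decays with $f$, gives
\begin{equation*}
\sum_{k \geq 0} |E_k^{\pm}(X)| \ll \sum_{f \leq X^{1/2}} \bigl( |E_0^{\pm}(f;X)| + |E_1^{\pm}(f;X)| \bigr) \ll_\varepsilon X^{2/3+\varepsilon}.
\end{equation*}

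The main obstacle is the last step: one must extract from \cite{btt} a uniform estimate whose error decays in $f$ rapidly enough for the final sum to be absorbed into $X^{\varepsilon}$. Verifying that the local densities at the primes of $3f$ assemble precisely into $a_f$, $b_f$ times the predicted coefficients at $3$, and executing the M\"obius inversion cleanly without losing a power of $X$, is the technical heart of the proof; the genus decomposition of Step~1 and the assembly of archimedean and $3$-adic contributions into $C^{\pm}, K^{\pm}$ and the Euler product in $\beta_k^{\pm}$ are then routine.
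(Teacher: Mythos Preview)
Your strategy is the paper's: decompose $N_k^\pm(X)$ via Fr\"ohlich's formula according to the totally ramified primes and the $3$-adic datum, apply the Bhargava--Taniguchi--Thorne asymptotics (Theorems~\ref{btt+a1} and~\ref{btt+a2}) to each piece, and sum. Two executional differences are worth flagging. First, the paper indexes by the \emph{full} squarefree product $f_F$ of primes $p\neq 3$ totally ramified in $F$, not just those $p\equiv 1\pmod 3$; a field with $f_F=f\in\SF_k$ (so $f$ has exactly $k$ prime divisors $\equiv 1\pmod 3$, plus arbitrarily many $\equiv 2\pmod 3$) lands in $N_k^\pm$ or $N_{k+1}^\pm$ according to the condition at $3$, and the sums $A_k=\sum_{f\in\SF_k}a_f$, $B_k=\sum_{f\in\SF_k}b_f$ then appear directly. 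Your parametrization by $f$ supported only on primes $\equiv 1\pmod 3$ also works, but recovering the stated $A_k,B_k$ then requires absorbing a leftover Euler product over primes $\equiv 2\pmod 3$. Second, no M\"obius inversion is needed: prescribing $f_F=f$ is a single almost-ordinary specification $\Sigma^f$ with $\Sigma^f_p=\mathscr{T}_p$ for $p\mid f$ and $\Sigma^f_p=\mathscr{A}'_p$ for $p\nmid 3f$, and since $\mathscr{A}'_p$ is ordinary, Theorem~\ref{btt+a1} applies directly and Theorem~\ref{btt+a2} (with $r=1$, $s=f$) bounds $\sum_f|E(X,\Sigma^f)|$ in one stroke. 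This dissolves the ``obstacle'' you identify in your final paragraph. A minor omission: cyclic cubic fields are handled separately in the paper (they contribute $O(X^{1/2})$).
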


In \cite{mt}, they obtained the primary term $\alpha_k^{\pm} X$ with an error term estimate $E^{\pm}_k \left( X \right) = O_\varepsilon \left( X^{16/17 + \varepsilon} \right)$. 

Although we do not show Theorem~\ref{mtkairyou3} as a consequence of Theorem~\ref{mtkairyou1}, the coefficients of the main terms satisfy the relations
\begin{equation*}
    \alpha^{\pm} = \sum_{k \ge 0} 3^k \alpha_k^{\pm}, \quad
    \beta^{\pm} = \sum_{k \ge 0} 3^k \beta_k^{\pm},
\end{equation*}
which reflect the relation
\begin{equation*}
    \sum_{\substack{\left[ F \colon \mathbb{Q} \right] = 3 \\ 0< \pm \disc \left( F \right) < X}} g_F
    = \sum_{k \ge 0} 3^k N^{\pm}_k \left( X \right) 
\end{equation*}
between the counting functions. 

Theorems~\ref{mtkairyou3} and \ref{mtkairyou1} can be viewed as special cases of our more general results with local specifications. We consider, for a place $v$ of $\mathbb{Q}$ (i.e. either a prime $p$ or the infinite place $\infty$), a set $\Sigma_v$ of isomorphism classes of \'{e}tale cubic algebras over $\mathbb{Q}_v$. We call $\Sigma_p$ a \textit{local specification} at $v$, and say that a cubic field $F$ satisfies $\Sigma_v$ if $F \otimes_{\mathbb{Q}} \mathbb{Q}_v \in \Sigma_v$. 

Note that a cubic \textit{splitting type} at a prime $p$ is also a local specification at $p$. There are five ways in which a prime can split in a cubic field; totally split, partially split, inert, partially ramified, and totally ramified cases, denoted by $( 111 )$, $( 12 )$, $( 3 )$, $( 1^2 1 )$, and $( 1^3 )$, respectively. Let $\mathscr{A}_p$ be the set of all these classes, and let $\mathscr{T}_p$ be the set consisting of the totally ramified type $(1^3)$. We write $\mathscr{A}'_p \coloneqq \mathscr{A}_p \setminus \mathscr{T}_p$. We say that $\Sigma_p$ is \textit{ordinary} if $\Sigma_p = \mathscr{A}_p$ or $\Sigma_p = \mathscr{A}'_p$. Let $\Sigma = \left( \Sigma_v \right)_v$ be a collection of local specifications where $v$ runs over all places. We say that $\Sigma = \left( \Sigma_v \right)_v$ is \textit{almost ordinary} if $\Sigma_p$ is ordinary at $p$ for all but finitely many primes $p$. By abuse of notation, we also write $\Sigma = \left( \Sigma_v \right)_v$ for the set of cubic fields $F$ satisfying $\Sigma_v$ at every place $v$. 

In this paper, we impose the following conditions on the collection of local specifications $\Sigma = \left( \Sigma_v \right)_v$: 
\begin{itemize}
    \item For all primes $p$ other than $3$, $\Sigma_p$ is a union of splitting types, 
    \item For all primes $p$ other than $3$, $\Sigma_p$ is either $\mathscr{A}_p$, $\mathscr{T}_p$, or a subset of $\mathscr{A}'_p$,
    \item For the infinite place $\infty$, $\Sigma_\infty$ is a singleton: either $\left\{ \mathbb{R}^3 \right\}$ or $\left\{ \mathbb{R} \times \mathbb{C} \right\}$. 
\end{itemize}
We impose the second condition because the genus number $g_F$ of a cubic field $F$ is determined by whether each prime $p \equiv 1 \pmod{3}$ has the totally ramified type $(1^3)$ in $F$. 

For an almost ordinary collection of local specifications $\Sigma = \left( \Sigma_v \right)_v$ satisfying the conditions above, let $c \left( \Sigma \right)$ denote the product of all primes $p \neq 3$ such that $\Sigma_p$ is not ordinary and that $\Sigma_p \neq \mathscr{T}_p$, and let $l = l \left( \Sigma \right)$ denote the number of primes $p \equiv 1 \pmod{3}$ such that $\Sigma_p = \mathscr{T}_p$. 

We now state our main theorems for almost ordinary local specifications. 
\begin{thm}\label{main}
    Let $k \ge 0$ be an integer and let $\Sigma = \left( \Sigma_v \right)_v$ be an almost ordinary collection of local specifications satisfying the conditions above. Let $N_k \left( X,\Sigma \right)$ denote the number of isomorphism classes of cubic fields $F$ satisfying $\left| \disc \left( F \right) \right| < X$, $F \in \Sigma$, and $g_F = 3^k$. 
    Then there exist real constants $\alpha^{\Sigma}_k$ and $\beta^{\Sigma}_k$ described explicitly such that
    \begin{equation}\label{mainsiki}
        N_k \left( X , \Sigma \right)
        = \alpha^{\Sigma}_k X 
        + \beta^{\Sigma}_k X^{5/6}
        + E_k \left( X , \Sigma \right),
    \end{equation}
    with the following bound on the sum of the error terms $E_k \left( X, \Sigma \right)$:
    \begin{equation}
        \sum_{k \ge l} \left| E_k \left( X , \Sigma \right) \right| \ll_{\varepsilon} {c \left( \Sigma \right)}^{2/3} X^{2/3 + \varepsilon}.
    \end{equation}
\end{thm}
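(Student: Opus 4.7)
The plan is to reduce the problem to a sum of cubic-field counts with refined local specifications and then to apply the uniform secondary-term asymptotic of Bhargava, Taniguchi and Thorne \cite{btt}. By Theorem~\ref{genus}, for a cubic field $F$ we have $g_F = 3^{t(F)}$, where $t(F)$ is the number of primes $p \equiv 1 \pmod{3}$ that are totally ramified in $F$. Under our hypothesis on the $\Sigma_p$, the additional totally ramified primes beyond the $l$ already prescribed by $\Sigma$ can only come from the cofinite set
\begin{equation*}
    P = P(\Sigma) \coloneqq \{p \equiv 1 \pmod{3} : \Sigma_p = \mathscr{A}_p\}.
\end{equation*}

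For each finite subset $T \subset P$, let $\Sigma_T$ denote the refinement of $\Sigma$ that agrees with $\Sigma$ outside $P$, with $\Sigma_{T,p} = \mathscr{T}_p$ at $p \in T$ and $\Sigma_{T,p} = \mathscr{A}'_p$ at $p \in P \setminus T$. Partitioning cubic fields by their totally ramified primes in $P$ gives
\begin{equation*}
    N_k(X, \Sigma) = \sum_{\substack{T \subset P \\ |T| = k - l}} N(X, \Sigma_T) \qquad (k \ge l),
\end{equation*}
and each $\Sigma_T$ is a collection of splitting-type conditions to which \cite{btt} directly applies:
\begin{equation*}
    N(X, \Sigma_T) = \alpha(\Sigma_T) X + \beta(\Sigma_T) X^{5/6} + E(X, \Sigma_T).
\end{equation*}
The coefficients $\alpha(\Sigma_T)$ and $\beta(\Sigma_T)$ are expressed as products of local densities at each place, so summing over $T$ converts the free choices at primes in $P$ into combinatorial sums of the type $A_k$ and $B_k$ from the discussion before Theorem~\ref{mtkairyou1}. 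This yields the explicit closed form for the coefficients $\alpha_k^{\Sigma}$ and $\beta_k^{\Sigma}$, and produces the desired equality \eqref{mainsiki} with
\begin{equation*}
    E_k(X, \Sigma) = \sum_{\substack{T \subset P \\ |T| = k - l}} E(X, \Sigma_T).
\end{equation*}

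The main technical obstacle is the error bound. The uniform estimate of \cite{btt} yields $|E(X, \Sigma_T)| \ll_\varepsilon c(\Sigma_T)^{2/3} X^{2/3 + \varepsilon}$, where $c(\Sigma_T) \le c(\Sigma) \prod_{p \in T} p$ since the primes in $T$ become non-ordinary in $\Sigma_T$. A naive sum over all $T \subset P$ diverges because $\sum_p p^{2/3}$ does, so one must split the argument according to the size of $T$. For ``small'' $T$, say $\prod_{p \in T} p \le Y$ for a parameter $Y$ to be chosen, one applies the BTT bound directly; for ``large'' $T$, one uses instead a trivial upper bound on $N(X, \Sigma_T)$ (via a Davenport--Heilbronn-type density estimate), which is controlled by $\prod_{p \in T} p^{-2} \cdot X$ and therefore decays fast enough to handle the tail. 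Balancing the two regimes—together with the constraint $\prod_{p \in T} p^2 \ll X$, which forces the effective sum over $T$ to be finite—should give the claimed bound $\sum_{k \ge l} |E_k(X, \Sigma)| \ll_\varepsilon c(\Sigma)^{2/3} X^{2/3 + \varepsilon}$. The delicate point is that the split must be performed uniformly in $\Sigma$, so that the implied constant depends only on $c(\Sigma)$ and not on $|P|$ or on the number $l$ of prescribed ramified primes.
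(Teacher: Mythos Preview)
Your decomposition $N_k(X,\Sigma)=\sum_{|T|=k-l}N(X,\Sigma_T)$ is close in spirit to the paper's, but there are two genuine gaps.

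First, the formula $g_F=3^{t(F)}$ with $t(F)$ counting totally ramified primes $p\equiv 1\pmod 3$ is not what Theorem~\ref{genus} gives. For non-cyclic $F$ one has $g_F=3^{e_F}$ where $e_F$ counts \emph{odd} primes $p$ totally ramified in $F$ with $(d_F/p)=1$; for $p\ne 2,3$ this reduces to $p\equiv 1\pmod 3$, but the prime $3$ contributes an additional $+1$ to $e_F$ whenever $3$ is totally ramified and $d_F\equiv 1\pmod 3$. The paper handles this via a second count $M'(X,f)$ restricted to the local set $\mathscr{T}_3^{(1)}\subset\mathscr{A}_3$ at $3$, leading to the three-term identity~\eqref{nmmm} rather than a single sum over $T$. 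Without this correction your constants $\alpha_k^\Sigma,\beta_k^\Sigma$ will be wrong (compare \eqref{mainkeisuu11}--\eqref{mainkeisuu12}, which involve $C_3(\Sigma_3\cap\mathscr{T}_3^{(1)})$ and $K_3(\Sigma_3\cap\mathscr{T}_3^{(1)})$).

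Second, and more seriously, your balancing argument cannot reach $X^{2/3+\varepsilon}$. The individual error bound of Theorem~\ref{btt+a1} gives $|E(X,\Sigma_T)|\ll_\varepsilon E(\Sigma_T)\,X^{2/3+\varepsilon}$, and since $E_p(\mathscr{T}_p)=E_p(\mathscr{A}'_p)=1$ this quantity is \emph{independent of $T$} (in particular your inequality $c(\Sigma_T)\le c(\Sigma)\prod_{p\in T}p$ is beside the point --- in fact $c(\Sigma_T)=c(\Sigma)$, since by definition $\mathscr{T}_p$-conditions are excluded from $c$). Summing this uniform bound over the $\ll Y$ sets $T$ with $\prod_{p\in T}p\le Y$ gives $\ll Y\cdot c(\Sigma)^{2/3}X^{2/3+\varepsilon}$, while your trivial tail estimate gives $\ll X/Y$; balancing yields only $X^{5/6}$, the same order as the secondary term you are trying to isolate. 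What the paper uses instead is the \emph{averaged} error estimate of \cite{btt} (Theorem~\ref{btt+a2} here), which bounds $\sum_s|E(X,\Sigma^{1,s})|\ll_\varepsilon X^{2/3+\varepsilon}\prod_{p\mid U}E_p(\Sigma_p)$ directly, with no loss from the sum over the totally-ramified parameter $s$. Applied with $r=1$, $U=3c(\Sigma)t$, and $s$ running over the squarefree integers recording the set of totally ramified primes, this gives $\sum_f|E(X,f)|\ll_\varepsilon c(\Sigma)^{2/3}X^{2/3+\varepsilon}$ in a single step, with no splitting needed.
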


For the explicit descriptions of $\alpha^{\Sigma}_k$ and $\beta^{\Sigma}_k$, see \eqref{mainkeisuu11} and \eqref{mainkeisuu12} in Section~\ref{pmr}. 
Since $N_k \left( X, \Sigma \right) =0$ for $k< l \left( \Sigma \right)$, the estimate is only valuable for $k \ge l \left( \Sigma \right)$. 

We also give uniform estimates for the moments of the genus number. 

\begin{thm}\label{main2}
    Let $\Sigma = \left( \Sigma_v \right)_v$ be an almost ordinary collection of local specifications satisfying the conditions above. Then for any $\varepsilon >0$ and any positive function $G \left( X \right) = o \left( \log {\log {X}} \right)$, there is a constant $C_{G,\varepsilon}$ that depends on $G$ and $\varepsilon$, such that for all $X > C_{G,\varepsilon}$, we have the following uniform estimate in the region $\Re \left( z \right) \le G \left( X \right)$:
    \begin{equation}\label{moment}
        \sum_{\substack{\left| \disc \left( F \right) \right| < X \\ F \in \Sigma}} {g_F}^z
        = \alpha^{\Sigma} \left( z \right) X + \beta^{\Sigma} \left( z \right) X^{5/6} + O_{\varepsilon} \left( 3^{l G \left( X \right)} {c \left( \Sigma \right)}^{2/3} X^{2/3 + \varepsilon} \right),
    \end{equation}
    where 
    \begin{equation*}
        \alpha^{\Sigma} \left( z \right) = \sum_{k \ge l} 3^{kz} \alpha^{\Sigma}_k , \quad
        \beta^{\Sigma} \left( z \right) = \sum_{k \ge l} 3^{kz} \beta^{\Sigma}_k,
    \end{equation*}
    and the implied constant depends only on $\varepsilon$, not on $z$ and $G$. 
\end{thm}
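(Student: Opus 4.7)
The strategy is to deduce Theorem~\ref{main2} from Theorem~\ref{main} via the tautological identity
\[
\sum_{\substack{|\disc(F)| < X \\ F \in \Sigma}} g_F^z = \sum_{k \ge l} 3^{kz}\, N_k(X,\Sigma),
\]
which holds because $g_F = 3^k$ with $k \ge l = l(\Sigma)$. A first observation is that the right-hand sum is finite: $N_k(X,\Sigma) = 0$ once $k$ exceeds some threshold $M(X) \ll \log X/\log\log X$, since a cubic field $F$ with $g_F = 3^k$ has at least $k$ totally ramified primes $p \equiv 1 \pmod 3$, each contributing $p^2$ to $|\disc(F)|$, with the smallest such prime being $7$.

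Substituting the expansion of Theorem~\ref{main} into each $N_k(X,\Sigma)$ yields
\begin{align*}
\sum_{k \ge l} 3^{kz} N_k(X,\Sigma)
&= X \sum_{l \le k \le M(X)} 3^{kz}\, \alpha^{\Sigma}_k
+ X^{5/6} \sum_{l \le k \le M(X)} 3^{kz}\, \beta^{\Sigma}_k \\
&\quad + \sum_{l \le k \le M(X)} 3^{kz}\, E_k(X,\Sigma).
\end{align*}
Completing the first two partial sums to all $k \ge l$ reconstructs $\alpha^{\Sigma}(z) X + \beta^{\Sigma}(z) X^{5/6}$. The tails $\sum_{k > M(X)} 3^{kz}\alpha^\Sigma_k$ and $\sum_{k > M(X)} 3^{kz}\beta^\Sigma_k$ are controlled via the explicit formulas given later in Section~\ref{pmr}: the coefficients $\alpha^{\Sigma}_k,\beta^{\Sigma}_k$ inherit factorial decay from bounds $A_k, B_k \ll C^k/k!$ on the convergent sums defining them, so once $k \gtrsim \log X/\log\log X$ the tail mass is superpolynomially small, even after paying the factor $3^{kG(X)}$.

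For the genuine error, we use $|3^{kz}| \le 3^{kG(X)} \le 3^{M(X) G(X)}$ together with the bound from Theorem~\ref{main}:
\[
\left|\sum_{l \le k \le M(X)} 3^{kz} E_k(X,\Sigma)\right|
\le 3^{M(X) G(X)} \sum_{k \ge l} |E_k(X,\Sigma)|
\ll_\varepsilon 3^{M(X) G(X)}\, c(\Sigma)^{2/3} X^{2/3+\varepsilon}.
\]
The hypothesis $G(X) = o(\log\log X)$ combined with $M(X) \ll \log X/\log\log X$ gives $M(X) G(X) \log 3 = o(\log X)$, whence $3^{M(X) G(X)} = X^{o(1)}$; so for all $X$ exceeding some $C_{G,\varepsilon}$ this factor is bounded by $X^{\varepsilon}$, which we absorb by shrinking $\varepsilon$ once. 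The resulting bound is a fortiori $\le 3^{lG(X)} c(\Sigma)^{2/3} X^{2/3+\varepsilon}$ as claimed.

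The main obstacle is the uniformity bookkeeping. One must verify that the implied constants in the tail estimates on $\alpha^\Sigma_k,\beta^\Sigma_k$, and in the absorption $3^{M(X) G(X)} \ll X^\varepsilon$, are truly independent of $z$ throughout the half-plane $\Re(z) \le G(X)$, and that the threshold past which $X$ must be taken depends only on $G$ and $\varepsilon$. Concretely, this requires extracting an effective factorial convergence rate for $A_k$ and $B_k$ from their Euler-product definitions; once this analytic input is in place, the remainder of the argument is a straightforward summation of the output of Theorem~\ref{main}.
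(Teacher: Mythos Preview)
Your proposal is correct in outline and reaches the result, but it is organized differently from the paper's proof, and one of your intermediate claims is overstated.

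The paper does not sum the output of Theorem~\ref{main} over $k$. Instead it goes one level deeper and rewrites $\sum_{k\ge l}3^{kz}N_k(X,\Sigma)$ directly as a weighted sum of $M(X,f)$ and $M'(X,f)$ over $f\in\SF(\Sigma)$, with weights $3^{(\psi(f)+l)z}$ and $(3^z-1)3^{(\psi(f)+l)z}$. The natural truncation is then $f<\sqrt{X}$, which is exact because $M(X,f)=M'(X,f)=0$ for $f\ge\sqrt{X}$. The tail of the main terms ($E_1$ in the paper) is simply $\sum_{f\ge\sqrt{X}}f^{-2+\varepsilon}$ after applying Robin's bound $\psi(f)\le 1.38402\,\log f/\log\log f$ together with $G(X)=o(\log\log X)$; the error sum ($E_2$) is handled the same way. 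No factorial decay of $A_k,B_k$ is invoked.

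Your route truncates in $k$ at $M(X)\ll\log X/\log\log X$ and then needs two additional ingredients: (i) a uniform factorial-type bound $A_k,B_k\ll C^k/k!$, and (ii) a Poisson-tail estimate for $\sum_{k>M(X)}(3^{G(X)}C)^k/k!$. Both are obtainable, but note that the resulting tail is only polynomially small in $X$ (roughly $X^{-c}$ with $c\approx 0.69$ coming from Robin's constant), not ``superpolynomially small'' as you wrote; this is still enough since $c>1/3$. There is also a $\Sigma$-uniformity subtlety: when $l$ is comparable to $\tilde M(X)$ the factorial tail starts late, and you must then use that $a_t$ is correspondingly tiny to compensate. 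If you unpack this case analysis, you end up re-deriving the paper's $f<\sqrt{X}$ truncation in disguise (since $\psi(tf)>\tilde M(X)$ forces $tf>\sqrt{X}$ by Robin). So your approach is valid, but the paper's $f$-truncation is both shorter and avoids the extra analytic input on $A_k,B_k$.
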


Alternative descriptions of $\alpha^{\Sigma} \left( z \right)$ and $\beta^{\Sigma} \left( z \right)$ are given in \eqref{mainkeisuu21} and \eqref{mainkeisuu22} in Section~\ref{pmr}. 
We note that the case $z=0$ recovers Theorem~1.3 of \cite{btt} and the case $z=1$ implies our Theorem~\ref{mtkairyou3}. 

In these theorems, it is possible to remove the second bullet assumption 
"For all primes $p$ other than $3$, $\Sigma_p$ is either $\mathscr{A}_p$, $\mathscr{T}_p$, or a subset of $\mathscr{A}'_p$". 
For that, we instead consider the partition $\Sigma_p = \mathscr{T}_p \cup \left( \Sigma_p \cap \mathscr{A}'_p \right)$ for each $p \mid c \left( \Sigma \right)$. We then apply Theorems~\ref{main} and \ref{main2} to each the $\le 2^{\omega \left( c \left( \Sigma \right) \right)}$ resulting local specifications and sum their asymptotic formulas. This yields asymptotic formulas with error terms of $\ll_{\varepsilon} {c \left( \Sigma \right)}^{2/3 + \varepsilon} X^{2/3 + \varepsilon}$ and $O_{\varepsilon} \left( 3^{l' G \left( X \right)} {c \left( \Sigma \right)}^{2/3 + \varepsilon} X^{2/3 + \varepsilon} \right)$, respectively, where $l' = l' \left( \Sigma \right) $ is the number of primes $p \equiv 1 \pmod{3}$ such that $\mathscr{T}_p \subseteq \Sigma_p \subsetneq \mathscr{A}_p$. We can of course remove the third assumption that $\Sigma_\infty$ is a singleton. 

Our proof largely follows the strategy of McGown and Tucker \cite{mt}. The improved uniform estimates of \cite{btt} allow us to obtain the improved error estimates in our main theorems. 

We mention related works on genus numbers of number fields. Similar questions have been studied for other families of number fields. The distribution of genus numbers for abelian fields is analyzed in \cite{fln}, while the counting of quintic fields with a given genus number is treated in \cite{mtt}. 
More recently, Bose, McGown, Panpaliya, Welling, and Williams \cite{bmpww} studied the average size of the genus number for pure fields of prime degree. To our best knowledge, our result is the first example in which a secondary term with a lower exponent of $X$ than the primary term was obtained in the kind of statistics of genus numbers. 

The organization of this paper is as follows:
In Section~\ref{sec2}, we summarize the necessary results on the density of cubic fields. In particular, we state the uniform error term estimates for counting cubic fields with prescribed local specifications, which are the results in \cite{btt}. 
In Section~\ref{pmr}, we provide the proofs of our main results, Theorems~1.3 and 1.4. 
We first recall Fr\"{o}hlich's formula, which shows that the genus number is determined by local properties. Then, 
using the discriminant decomposition we analyze the counting functions for cubic fields with fixed squarefree parts of the discriminant. By summing these contributions, we derive the asymptotic formulas for the distribution of genus numbers.

\section{Preliminaries on Cubic Field Densities}\label{sec2}
In studying the distribution of cubic fields satisfying given local conditions, the following two theorems (Theorem~\ref{btt+a1} and Theorem~\ref{btt+a2}) serve as our main tools. Theorems~\ref{btt+a1} and \ref{btt+a2} are Theorems~1.3 and 1.4 in \cite{btt}, respectively. 

\begin{thm}[Bhargava-Taniguchi-Thorne]\label{btt+a1}
    For an almost ordinary collection of local specifications $\Sigma = \left( \Sigma_v \right)_v$, let $N \left( X , \Sigma \right)$ denote the number of $F \in \Sigma$ satisfying $\left| \disc \left( F \right) \right| \le X$
    Then, we have  
    \begin{equation}
        N \left( X , \Sigma \right) =
        C \left( \Sigma \right) \frac{1}{12 \zeta \left( 3 \right)} X
        + K \left( \Sigma \right) \frac{4 \zeta \left( 1/3 \right)}{5 {\Gamma \left( 2/3 \right)}^3 \zeta \left( 5/3 \right)} X^{5/6}
        + O_{\varepsilon} \left( E \left( \Sigma \right) X^{2/3 + \varepsilon} \right)
    \end{equation}
    where the constants $C \left( \Sigma \right)$, $K \left( \Sigma \right)$, and $E \left( \Sigma \right)$ are defined below.
\end{thm}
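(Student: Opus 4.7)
The plan is to translate the problem via the Delone-Faddeev correspondence into counting $\GL_2(\mathbb{Z})$-orbits of integral binary cubic forms and then analyze the associated Shintani zeta function. Under Delone-Faddeev, isomorphism classes of cubic rings $R$ over $\mathbb{Z}$ correspond bijectively to $\GL_2(\mathbb{Z})$-equivalence classes of integral binary cubic forms $V(\mathbb{Z})$, with matching discriminants. Cubic fields $F$ satisfying $\Sigma_v$ at every place then correspond to orbits of irreducible forms whose associated ring is maximal (i.e.\ equal to $\mathcal{O}_F$) and whose image in each double-coset space $\GL_2(\mathbb{Z}_p)\backslash V(\mathbb{Z}_p)$ lies in a prescribed subset. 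Thus one aims to evaluate a weighted sum over cubic orders with $|\disc(R)|<X$, where the weight is a product of local characteristic functions together with a M\"{o}bius-type factor enforcing maximality.

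The analytic engine is the Shintani zeta function $\xi^{\pm}(s)=\sum|\disc(R)|^{-s}$, whose Sato-Shintani functional equation provides meromorphic continuation with poles only at $s=1$ and $s=5/6$; Mellin inversion and a contour shift recover the two main terms directly from the residues, producing the explicit constants $\frac{1}{12\zeta(3)}$ and $\frac{4\zeta(1/3)}{5\,\Gamma(2/3)^3\zeta(5/3)}$. I would impose the local specifications by restricting to congruence subfamilies of $V(\mathbb{Z})$ modulo a suitable power of $c(\Sigma)$, so that the global zeta function is replaced by a finite linear combination indexed by $\GL_2(\mathbb{Z}/c(\Sigma)^2\mathbb{Z})$-orbits; the residues then factor into local densities that assemble into $C(\Sigma)$ and $K(\Sigma)$. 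The maximality sieve at primes $p\nmid c(\Sigma)$ is executed as an inclusion-exclusion over nonmaximal orders, truncated at a threshold $\asymp X^{\delta}$, with the tail contribution absorbed into the error term.

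The main obstacle, and the content of \cite{btt} beyond the earlier work of Shintani and Taniguchi-Thorne, is to obtain the error bound $X^{2/3+\varepsilon}$ uniformly in the conductor with explicit polynomial dependence $E(\Sigma)$. Shifting the contour to $\Re(s)=2/3+\varepsilon$ produces a dual sum whose size must be controlled via the approximate functional equation, and naive estimates give unacceptable growth in $c(\Sigma)$. I expect the crux to be (i) a sharp evaluation of the local orbital integrals at primes $p\mid c(\Sigma)$, which is the source of the factor $c(\Sigma)^{2/3}$ in the ordinary case, and (ii) an efficient truncation of the maximality sieve so that the omitted primes $p>X^{\delta}$ contribute only $\ll X^{2/3+\varepsilon}$ uniformly in $\Sigma$. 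Once these uniform analytic and arithmetic estimates are in place, assembling the residues and bounding the shifted contour integral yield the claimed expansion.
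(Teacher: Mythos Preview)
The paper does not prove this theorem at all: it is quoted verbatim as Theorem~1.3 of \cite{btt} (Bhargava--Taniguchi--Thorne) and used as a black box input to the results of Section~\ref{pmr}. There is therefore no ``paper's own proof'' to compare your proposal against.

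Your sketch is a reasonable high-level outline of the strategy behind \cite{btt} itself (Delone--Faddeev, Shintani zeta functions, contour shift past the poles at $s=1$ and $s=5/6$, local densities assembling into $C(\Sigma)$ and $K(\Sigma)$, and a careful uniform error analysis). But within the scope of the present paper this theorem is simply cited, not reproved, so no argument is expected here.
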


The constants $C \left( \Sigma \right)$, $K \left( \Sigma \right)$, and $E \left( \Sigma \right)$ are defined by 
\begin{equation}
    C \left( \Sigma \right) \coloneqq \prod_{v} C_v \left( \Sigma_v \right), \quad
    K \left( \Sigma \right) \coloneqq \prod_{v} K_v \left( \Sigma_v \right), \quad
    E \left( \Sigma \right) \coloneqq \prod_v E_v \left( \Sigma_v \right),
\end{equation}
where $v$ runs over all places of $\mathbb{Q}$. 
The local factors $C_{\infty} \left( \Sigma_{\infty} \right)$, $K_{\infty} \left( \Sigma_{\infty} \right)$, and $E_\infty \left( \Sigma_\infty \right)$ are defined by $C_{\infty} \left( \mathbb{R}^3 \right) \coloneqq C^{+}$, $C_{\infty} \left( \mathbb{R} \times \mathbb{C} \right) \coloneqq C^{-}$, $K_{\infty} \left( \mathbb{R}^3 \right) \coloneqq K^{+}$, $K_{\infty} \left( \mathbb{R} \times \mathbb{C} \right) \coloneqq K^{-}$, and $E_\infty \left( \Sigma_\infty \right) \coloneqq 1$. 
The local factors $C_p \left( \Sigma_p \right)$ and $K_p \left( \Sigma_p \right)$ for a prime $p$ are defined as in Theorem~1.3 of \cite{btt}. 

Let $U$ be any squarefree positive integer, and for each $p \mid U$, let $\Sigma_p$ be an arbitrary local specification such that every algebra in $\Sigma_p$ has the same ramification type at $p$. For each pair $r,s$ of positive squarefree integers such that $\left( rs , U \right) =1$, we complete this to a collection $\Sigma^{r,s} = \left( \Sigma_v \right)_v$ of local specifications over all $v$ satisfying the following condition:
\begin{itemize}
    \item If $v \mid r$, then $\Sigma_v$ consists of all partially ramified cubic extensions of $\mathbb{Q}_v$.
    \item If $v \mid s$, then $\Sigma_v$ consists of all totally ramified cubic extensions of $\mathbb{Q}_v$.
    \item If $v \nmid Urs$, then $\Sigma_v$ is ordinary, i.e., $\Sigma_v = \mathscr{A}_v$ or $\mathscr{A}'_v$.
    \item If $v = \infty$, then $\Sigma_v = \left\{ \mathbb{R}^3 \right\}$ or $\left\{ \mathbb{R} \times \mathbb{C} \right\}$.
\end{itemize}

\begin{thm}[Bhargava-Taniguchi-Thorne]\label{btt+a2}
    For each collection of cubic local specifications $\Sigma = \left( \Sigma_v \right)_v$, let $E \left( X, \Sigma \right)$ denote the error term in estimating $N \left( X, \Sigma \right)$ in Theorem~\ref{btt+a1}. Then for any $\varepsilon >0$, we have
    \begin{equation}
        \sum_{r \le R} \sum_{s} \left| E \left( X_{r,s} , \Sigma^{r,s} \right) \right|
        \ll_{\varepsilon}
        X^{2/3 + \varepsilon} R^{2/3} \prod_{p \mid U} E_p \left( \Sigma_p \right)
    \end{equation}
    for any $R,X > 0$ and any $X_{r,s}$ with $X_{r,s} \le X$; here the sum is over all squarefree integers $r$ and $s$ with $\left( U , rs \right) =1$ and $r \le R$. 
\end{thm}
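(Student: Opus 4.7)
This is Theorem~1.4 of \cite{btt}, quoted here as an input; the paper treats it as a black box. Here is the plan I would follow to prove it from scratch.

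The starting point is the Delone-Faddeev-Gan-Gross-Savin parametrization of cubic rings by $\GL_2(\mathbb{Z})$-orbits on the lattice of integer binary cubic forms, under which ring and form discriminants agree. Counting $F \in \Sigma^{r,s}$ with $|\disc(F)| \le X_{r,s}$ then reduces to counting irreducible forms in a cone together with a local maximality sieve imposing the prescribed splitting types at each finite place. The Shintani zeta function $\xi_i(s)$ of binary cubic forms has a double pole at $s=1$ and a simple pole at $s=5/6$, and the standard contour shift applied to its Dirichlet series produces the main term, the $X^{5/6}$ secondary term, and a pointwise error of size $\ll_\varepsilon X^{2/3+\varepsilon} \prod_p E_p(\Sigma_p)$, as in Theorem~\ref{btt+a1}.

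For the uniform estimate I would first apply Poisson summation on the Shintani side, so that the local conditions at $r$ and $s$ translate into explicit Fourier weights at the primes $p \mid rs$. The inner sum over $s$ can then be collapsed as an Euler product over primes $p \nmid Ur$ whose local factor is a convergent power series, which leaves a one-parameter family indexed by $r \le R$. Combining the pointwise trivial bound with a second (or higher) moment estimate for the dual Shintani sum, and interpolating via H\"{o}lder, replaces the trivial linear $R$ by the savings $R^{2/3}$ in front of $X^{2/3+\varepsilon}$, while the local factors at primes $p \mid U$ propagate out multiplicatively as $\prod_{p \mid U} E_p(\Sigma_p)$.

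The main obstacle is the mean-value step. One has to prove a higher-moment bound for the dual Shintani sum that is genuinely uniform across the family $\Sigma^{r,s}$, and one has to check that the local factors at the bad primes dividing $U$ come out in the clean form $\prod_{p \mid U} E_p(\Sigma_p)$ rather than some larger quantity. In \cite{btt} this is accomplished by an intricate contour shift for a Shintani-type zeta function twisted by local specifications, together with an explicit evaluation of local orbital integrals at the ramified primes; reproducing that piece of analysis is where the real work lies, and everything else in the argument is bookkeeping around it.
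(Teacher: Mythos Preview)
You are correct that the paper does not prove this statement: it is quoted verbatim as Theorem~1.4 of \cite{btt} and used as a black box, exactly as you say in your first sentence. There is nothing further to compare on the paper's side.

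Your supplementary sketch is a plausible high-level narrative, but it does not quite match how \cite{btt} actually obtains the uniform bound. The saving over $s$ does not come from collapsing an Euler product after Poisson summation, and the $R^{2/3}$ is not obtained by a H\"{o}lder interpolation between a trivial bound and a higher-moment estimate for a dual Shintani sum. Rather, \cite{btt} works on the geometry-of-numbers side: one averages the count of $\GL_2(\mathbb{Z})$-orbits of binary cubic forms that are nonmaximal at prescribed sets of primes, and the key input is a uniform ``on average over $q$'' estimate for the number of such orbits (their uniformity estimate for $q$-nonmaximal cubic rings). The gain in $r$ comes from the fact that partially ramified specifications cut the fundamental domain by a factor of roughly $r^{-1}$ while the lattice-point error scales more favorably, and the sum over $s$ is controlled because totally ramified conditions force $s^2 \mid \disc$, so the tail in $s$ is short. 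The clean factor $\prod_{p\mid U} E_p(\Sigma_p)$ arises directly from the local density computation at the bad primes, not from orbital integrals in a twisted Shintani zeta. If you want to reconstruct the proof, the relevant mechanism is the averaged geometry-of-numbers lemma in \cite{btt}, not a moment bound on the dual side.
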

This completes the preliminary setup for the proofs in the subsequent sections. 

\section{Proofs of Main Results}\label{pmr}
In this section, we prove Theorems~\ref{main} and \ref{main2}. For the rest of this paper, we fix an almost ordinary collection of local specifications $\Sigma = \left( \Sigma_v \right)_v$ satisfying the three conditions in Section~\ref{sec1}. Recall that we defined $c \left( \Sigma \right)$ and $l = l \left( \Sigma \right)$. Let $t= t \left( \Sigma \right)$ denote the product of primes $p \neq 3$ such that $\Sigma_p = \mathscr{T}_p$. For a positive integer $n$, we define $\psi \left( n \right)$ to be the number of prime divisors $p$ of $n$ such that $p \equiv 1 \pmod{3}$. Then by definition, $l = \psi \left( t \right)$. Let $\mathcal{P} = \mathcal{P} \left( \Sigma \right)$ denote the set of primes $p \neq 3$ such that $\Sigma_p = \mathscr{A}_p$. 

Let $F$ be a cubic field. Then $\disc \left( F \right)$ takes one of the forms $d_F {f_F}^2$, $9 d_F {f_F}^2$, or $81d_F {f_F}^2$, where $d_F$ is a fundamental discriminant; and $f_F$ is the product of all primes $p \neq 3$ that are totally ramified in $F$. This discriminant decomposition leads to an explicit description of the genus number of a cubic field, as established by Fr\"{o}hlich (see \cite{fro}; see also \cite{ishida} for related discussion). 
\begin{thm}[Fr\"{o}hlich]\label{genus}
    Let $e_F$ denote the number of odd primes $p$ such that $p$ is totally ramified in $F$ and $\left( d_F /p \right) = 1$, where $\left( d_F /p \right)$ is the usual Legendre symbol. Then we have
    \begin{equation}
        g_F =
        \begin{dcases}
            3^{e_F -1} &\text{if } F \text{ is cyclic,}\\
            3^{e_F} &\text{if } F \text{ is not cyclic.}
        \end{dcases}
    \end{equation}
\end{thm}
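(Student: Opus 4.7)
The plan is to identify $F^*$ via class field theory and then carry out a prime-by-prime local analysis. Write $F^* = F k^*$, where $k^*$ is the maximal absolutely abelian extension of $\mathbb{Q}$ such that $F k^*/F$ is unramified at every finite prime. Then $[F^* : F] = [k^* : k^* \cap F]$, and since $[F : \mathbb{Q}] = 3$ is prime and $k^*/\mathbb{Q}$ is abelian, $k^* \cap F$ equals $F$ precisely when $F$ is itself cyclic (in which case $F \subseteq k^*$), and equals $\mathbb{Q}$ otherwise. This dichotomy is exactly what accounts for the factor of $3$ discrepancy between the cyclic and non-cyclic cases in the statement, since $[k^* : F] = [k^* : \mathbb{Q}]/3$ in the cyclic case.

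Since $g_F$ is a $3$-power, it suffices to compute the $3$-part of $k^*/\mathbb{Q}$, which is a compositum of cyclic cubic subfields of cyclotomic fields. The condition that $F k^*/F$ is unramified at $p$ is checked locally and translates into a condition on $k^* \otimes_\mathbb{Q} \mathbb{Q}_p$ in terms of $F_p = F \otimes_\mathbb{Q} \mathbb{Q}_p$. For $p \neq 3$ ramification is tame, and a standard $\operatorname{lcm}$-of-ramification-indices argument shows that a cubic contribution to $k^*$ at $p$ requires $F_p$ to be totally ramified (so the splitting type of $p$ in $F$ is $(1^3)$) and additionally requires $\mathbb{Q}_p$ to admit an absolutely abelian cubic extension, i.e.\ $p \equiv 1 \pmod 3$. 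When both hold, the unique cyclic cubic field $k_p^{\mathrm{cyc}} \subseteq \mathbb{Q}(\zeta_p)$ of conductor $p$ contributes, and $F k_p^{\mathrm{cyc}}/F$ is unramified at $p$ and everywhere else.

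The remaining task is to recognize ``$p \equiv 1 \pmod 3$ and $p$ totally ramified in $F$'' as Fr\"{o}hlich's condition ``$p$ totally ramified in $F$ and $(d_F/p) = 1$''. For cyclic $F$ this is automatic since $d_F = 1$. For non-cyclic $F$ with Galois closure $\tilde F/\mathbb{Q}$ of group $S_3$, total ramification of $p$ in $F$ forces the inertia subgroup at $p$ to be $A_3 \subseteq S_3$; a short case analysis on the decomposition subgroup $D_p \in \{A_3, S_3\}$, together with the fact that $S_3$-extensions of $\mathbb{Q}_p$ do not exist when $p \equiv 1 \pmod 3$ (because then $\zeta_3 \in \mathbb{Q}_p$ and Kummer theory forces all cubic local extensions to be abelian), shows that the abelian case $D_p = A_3$ is equivalent to $p$ being split in the quadratic resolvent $\mathbb{Q}(\sqrt{d_F})$, i.e.\ $(d_F/p) = 1$. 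Combining these contributions, $[k^* : \mathbb{Q}] = 3^{e_F}$ up to the contribution at $p=3$, and the final formula follows from the paragraph-one dichotomy.

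The main obstacle is the prime $p = 3$, where wild ramification replaces the clean Kummer-theoretic picture. The tame $\operatorname{lcm}$ argument is no longer valid, and one must instead use local class field theory directly, enumerating the abelian cubic extensions of $\mathbb{Q}_3$ and matching them against the higher-unit filtration on the unit group of the completion of $F$ at a prime above $3$. A secondary difficulty is the globalization step: given the compatible local candidates at each relevant prime, one must verify (via the existence theorem of global class field theory, or explicitly via cyclic cubic subfields of $\mathbb{Q}(\zeta_N)$) that they assemble into a single honest abelian extension of $\mathbb{Q}$ realizing the prescribed local data.
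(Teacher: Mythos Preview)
The paper does not prove this statement at all: Theorem~\ref{genus} is quoted as a classical result of Fr\"{o}hlich, with a bare citation to \cite{fro} and \cite{ishida}, and the paper immediately moves on to use it. So there is no ``paper's own proof'' to compare against; any argument you supply is automatically different in the sense that the paper supplies none.

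That said, your outline is the standard class-field-theoretic route and is essentially correct in shape. A few remarks. First, you invoke ``since $g_F$ is a $3$-power'' before computing anything; strictly speaking this is part of what the theorem asserts, so in a self-contained proof you should justify it (e.g.\ by checking that for $\ell\neq 3$ no cyclic $\ell$-extension $k/\mathbb{Q}$ can become unramified in $Fk/F$, using that the local ramification indices of $F/\mathbb{Q}$ lie in $\{1,2,3\}$ and analyzing the $(1^21)$ case separately). Second, your equivalence ``$D_p=A_3 \Leftrightarrow (d_F/p)=1$'' is immediate, but the step you actually need is ``$D_p=A_3 \Leftrightarrow p\equiv 1\pmod 3$'' for totally ramified $p\nmid 6$; you gave one direction (no $S_3$-extensions of $\mathbb{Q}_p$ when $\zeta_3\in\mathbb{Q}_p$), and the converse follows because a totally tamely ramified cubic over $\mathbb{Q}_p$ with $p\equiv 2\pmod 3$ is non-Galois, forcing $D_p=S_3$. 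Third, as you yourself flag, the wild prime $p=3$ is a genuine gap in your write-up: the statement counts $p=3$ via the Legendre-symbol condition $(d_F/3)=1$, i.e.\ $d_F\equiv 1\pmod 3$, and matching this to the local picture at $3$ requires exactly the kind of case analysis (over the finitely many cubic \'etale $\mathbb{Q}_3$-algebras) that the paper later tabulates for other purposes. Finally, note that the theorem as stated also covers $p=2$ (an odd prime excludes only $p=2$, not $p=3$), so your restriction to $p\neq 2,3$ in the Legendre-symbol translation leaves a small residual case to check.
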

The contribution of cyclic fields is negligible for the main results of this paper. 
First, regarding Theorem~\ref{main}, the number of cyclic cubic fields with discriminant up to $X$ is known to be $O \left( X^{1/2} \right)$ (see \cite[Section~2]{mt}). 
Second, regarding Theorem~\ref{main2}, we verify that the contribution of cyclic fields to the moments is negligible. Recall that for a squarefree positive integer $f$ coprime to $3$, the number of cyclic cubic fields $F$ satisfying $f_F = f$ is at most $2^{\omega \left( f \right)}$ (see \cite[Section~4]{mt}), and their genus numbers satisfy $g_F \le 3^{\omega \left( f \right)}$. Under the assumption $\Re \left( z \right) \le G \left( X \right)$ with $G \left( X \right) = o \left( \log \log X \right)$, Th\'{e}or\`{e}me~11 of \cite{Rob} implies that for any $\varepsilon > 0$, there exists a constant $C'_{G, \varepsilon}$ depending on $G$ and $\varepsilon$ such that for all $X > C'_{G, \varepsilon}$, we have
\begin{equation*}
    \sum_{\substack{\left| \disc \left( F \right) \right| <X \\ F \colon \text{cyclic}}} {g_F}^z
    \ll \sum_{f < \sqrt{X}} 2^{\omega \left( f \right)} \cdot 3^{\omega \left( f \right) G \left( X \right)}
    \ll \sum_{f < \sqrt{X}} 6^{\omega \left( f \right) G \left( X \right)}
    \ll_\varepsilon \sum_{f < \sqrt{X}} X^{\varepsilon}
    \ll X^{1/2 + \varepsilon}. 
\end{equation*}
Consequently, we restrict our attention to non-cyclic cubic fields for the remainder of this paper. 

If $F$ is a cubic field and $p \neq 2,3$ is a prime that is totally ramified in $F$, then it is known (see \cite[Example~10]{ishida}) that
\begin{equation}
    \left( \frac{d_F}{p} \right) = 1 \Longleftrightarrow p \equiv 1 \!\!\!\pmod{3},
\end{equation}
and, of course, $\left( d_F /3 \right) = 1$ equivalent to $d_F \equiv 1 \pmod{3}$. 
We proceed by classifying the fields $F \in \Sigma$ according to the values of $f_F$. For any $F \in \Sigma$, the associated integer $f_F$ is necessarily a squarefree positive integer that is divisible by $t$, and whose remaining prime factors lie entirely in $\mathcal{P}$. We refer to a squarefree positive integer, whose prime factors lie entirely in $\mathcal{P}$ as \textit{$\Sigma$-compatible}. Recall that $\SF_k$ denotes the set of all squarefree positive integers which are coprime to $3$ and have exactly $k$ prime factors $p$ satisfying $p \equiv 1 \pmod{3}$. Let $\SF \left( \Sigma \right)$ denote the set of all $\Sigma$-compatible integers $f$. For $k \ge 0$, we then define $\SF_k \left( \Sigma \right) \coloneqq \SF_k \cap \SF \left( \Sigma \right)$. For a $\Sigma$-compatible integer $f$, we define
\begin{align}
    \mathcal{F} \left( f \right)
    &\coloneqq \left\{ F \in \Sigma \mid f_F = tf \right\},\\
    \mathcal{F}' \left( f \right)
    &\coloneqq \left\{ F \in \mathcal{F} \left( f \right) \mid 3 \text{ is totally ramified in } F \text{ and } d_F \equiv 1 \!\!\!\!\pmod{3} \right\} \label{F'f}, 
\end{align}
and let $M \left( X , f \right)$ and $M' \left( X , f \right)$ denote the number of cubic fields $F$ with $\left| \disc \left( F \right) \right| < X$ satisfying $F \in \mathcal{F} \left( f \right)$ and $F \in \mathcal{F}' \left( f \right)$, respectively. 
By Theorem~\ref{genus}, we have
\begin{equation}\label{nmmm}
    N_k \left( X , \Sigma \right)
    = \sum_{f \in \SF_{k-l} \left( \Sigma \right)} M \left( X , f \right)
    - \sum_{f \in \SF_{k-l} \left( \Sigma \right)} M' \left( X , f \right)
    + \sum_{f \in \SF_{k-l-1} \left( \Sigma \right)} M' \left( X , f \right). 
\end{equation}

First, we establish the following result for $M \left( X,f \right)$. 

\begin{prop}\label{prop1}
    For $\Sigma$-compatible integer $f$, We have
    \begin{equation}\label{pro32}
        M \left( X , f \right) = \alpha \left( f \right) X + \beta \left( f \right) X^{5/6} + E \left( X, f \right),
    \end{equation}
    where
    \begin{align*}
        \alpha \left( f \right)
        &\coloneqq \frac{C_{\infty} \left( \Sigma_{\infty} \right)}{12 \zeta \left( 2 \right)} a_{tf}
        \prod_{p \mid 3c \left( \Sigma \right)} C_p \left( \Sigma_p \right) \left( 1+ a_p \right),\\
        \beta \left( f \right)
        &\coloneqq
        \frac{4 \zeta \left( 1/3 \right) K_{\infty} \left( \Sigma_{\infty} \right)}{5 {\Gamma \left( 2/3 \right)}^3} b_{tf}
        \prod_p \left( 1- \frac{p^{1/3} +1}{p \left( p+1 \right)} \right)
        \prod_{p \mid 3c \left( \Sigma \right)} K_p \left( \Sigma_p \right) \left( 1+ b_p \right),
    \end{align*}
    and for the estimate of the error term $E \left( X,f \right)$ we have the following bound: 
    \begin{equation*}
        \sum_{f \in \SF \left( \Sigma \right)} \left| E \left( X , f \right) \right| \ll_{\varepsilon} {c \left( \Sigma \right)}^{2/3} X^{2/3 + \varepsilon}.
    \end{equation*}
\end{prop}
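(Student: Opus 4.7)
The plan is to express $M(X,f)$ as a Möbius-inverted sum of counting functions that satisfy purely local specifications, apply Theorem~\ref{btt+a1} to each, simplify the resulting sums into Euler products to read off $\alpha(f)$ and $\beta(f)$, and bound the aggregated error via Theorem~\ref{btt+a2}.

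For a $\Sigma$-compatible squarefree integer $m$, let $\Sigma^m$ denote the modification of $\Sigma$ obtained by replacing $\Sigma_p = \mathscr{A}_p$ with $\mathscr{T}_p$ at every $p \mid m$; this collection is again almost ordinary. The second bullet condition of Section~\ref{sec1} forces every $F \in \Sigma$ to satisfy $f_F = t h_F$ for a unique $h_F \in \SF(\Sigma)$, and $F$ is counted by $N(X, \Sigma^{fh'})$ precisely when $fh' \mid h_F$. Grouping by $h_F$ gives
\[
  N(X, \Sigma^{fh'}) = \sum_{\substack{g \in \SF(\Sigma) \\ (g, fh') = 1}} M(X, fh'g),
\]
so Möbius inversion on the lattice of squarefree $\Sigma$-compatible integers yields
\[
  M(X, f) = \sum_{\substack{h \in \SF(\Sigma) \\ (h, f) = 1}} \mu(h) \, N(X, \Sigma^{fh}).
\]

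Substituting the BTT asymptotic into each $N(X, \Sigma^{fh})$, the product $\prod_v C_v(\Sigma^{fh}_v)$ factors into an $h$-independent piece (at $\infty$, at $p \mid 3c(\Sigma)t$, at $p \mid f$, and the background Euler product $\prod_{p \in \mathcal{P}} C_p(\mathscr{A}_p)$) times the $h$-dependent ratio $\prod_{p \mid h}(C_p(\mathscr{T}_p)/C_p(\mathscr{A}_p))$. The Möbius sum against this ratio collapses to the absolutely convergent Euler product $\prod_{p \in \mathcal{P},\, p \nmid f}(1 - C_p(\mathscr{T}_p)/C_p(\mathscr{A}_p))$. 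Combining with the fixed piece and inserting the explicit BTT values --- together with the identity $(1 - p^{-3})/(1 - p^{-2}) = 1 + a_p$, which converts the global $\zeta(3)^{-1}$ into $\zeta(2)^{-1}$ at the cost of finitely many correction factors $(1 + a_p)$ at $p \mid 3c(\Sigma)$ --- reduces the main term to $\alpha(f)X$, with the factor $a_{tf}$ arising directly from the values of $C_p(\mathscr{T}_p)$ at $p \mid tf$. The identical procedure with $K_p$ in place of $C_p$, using the analogous secondary-term identity that expresses $\zeta(5/3)^{-1} \prod_{p \in \mathcal{P}}(K_p(\mathscr{A}_p) - K_p(\mathscr{T}_p))$ as $\prod_p(1 - (p^{1/3}+1)/(p(p+1)))$ up to the correction factors $(1 + b_p)$ at $p \mid 3c(\Sigma)$, yields the $\beta(f)X^{5/6}$ piece with $b_{tf}$ coming from $K_p(\mathscr{T}_p)$.

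For the error estimate, the same Möbius identity gives $E(X,f) = \sum_h \mu(h)\, E(X, \Sigma^{fh})$, so reindexing by $N = fh$ and counting the $2^{\omega(N)}$ coprime factorizations yields
\[
  \sum_{f \in \SF(\Sigma)} |E(X,f)| \le \sum_{N \in \SF(\Sigma)} 2^{\omega(N)}\, |E(X, \Sigma^N)|.
\]
Splitting each $\Sigma_p$ for $p \mid c(\Sigma)$ into its unramified and partially-ramified parts of single ramification type --- at a cost of $O_\varepsilon(c(\Sigma)^\varepsilon)$ --- puts us in position to apply Theorem~\ref{btt+a2} with $U = 3c(\Sigma)t$, $R = 1$ and totally ramified part given by $N$, producing a bound $\ll_\varepsilon X^{2/3 + \varepsilon} \prod_{p \mid c(\Sigma)} E_p(\Sigma_p) \ll c(\Sigma)^{2/3} X^{2/3 + \varepsilon}$ after inserting the explicit BTT bound $E_p \ll p^{2/3}$; the factor $2^{\omega(N)} \ll X^\varepsilon$ is absorbed into $\varepsilon$. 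The main obstacle is the Euler-product bookkeeping just described: matching the combinatorics of the BTT local factors to the stated closed forms --- in particular the passage from $\zeta(3)^{-1}$ to $\zeta(2)^{-1}$ in $\alpha(f)$ and the emergence of the universal product $\prod_p(1 - (p^{1/3}+1)/(p(p+1)))$ in $\beta(f)$ --- requires a careful identification of the global Euler-product identities satisfied by the ordinary local densities; by comparison the Möbius reduction and the error bound are essentially formal once Theorems~\ref{btt+a1} and \ref{btt+a2} are in hand.
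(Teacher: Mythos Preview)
Your approach is correct but takes an unnecessary detour compared with the paper. The paper observes that the specification $\mathscr{A}'_p$ (all non-totally-ramified types) is itself \emph{ordinary}, so one can simply set $\Sigma^f_p = \mathscr{A}'_p$ for every $p \in \mathcal{P}$ with $p \nmid f$ and $\Sigma^f_p = \mathscr{T}_p$ for $p \mid tf$. With this choice one has the exact identity $M(X,f) = N(X,\Sigma^f)$, so a \emph{single} application of Theorem~\ref{btt+a1} already gives the main terms, and a single application of Theorem~\ref{btt+a2} (with $s=f$ playing the role of the totally ramified modulus) bounds $\sum_f |E(X,f)|$ directly. Your route instead leaves $\mathscr{A}_p$ at the unused primes, forcing a M\"obius inversion $M(X,f)=\sum_h \mu(h)\,N(X,\Sigma^{fh})$; this is valid, and the Euler-product collapse of the main terms reproduces $\alpha(f)$ and $\beta(f)$, but it creates the extra factor $2^{\omega(N)}$ in the error analysis after reindexing by $N=fh$. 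That factor can indeed be absorbed, but your justification is incomplete: the bound $2^{\omega(N)}\ll X^\varepsilon$ only holds for $N\le X^{O(1)}$, whereas the M\"obius sum runs over all $N\in\SF(\Sigma)$. You must split off the tail $N>\sqrt{X}$, where $N(X,\Sigma^N)=0$ forces $|E(X,\Sigma^N)|\ll X/N^2$ and the tail contributes $O(X^{1/2+\varepsilon})$, before invoking Theorem~\ref{btt+a2} on the bounded range. The paper's direct approach avoids this entirely; what your approach buys is perhaps a slightly more transparent derivation of the Euler-product identities for $\alpha(f)$ and $\beta(f)$, at the cost of heavier error bookkeeping.
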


\begin{proof}
    We define $\Sigma^f = \left( \Sigma^f_v \right)_v$ by setting
    \begin{equation}
        \Sigma^f_v \coloneqq
        \begin{dcases}
            \Sigma_v \cap \mathscr{T}_v &\text{if } v \mid tf;\\
            \Sigma_v \cap \mathscr{A}'_v &\text{if } v \nmid 3tf;\\
            \Sigma_v &\text{if } v = 3 , \infty.
        \end{dcases}
    \end{equation}
    Then we have $M \left( X , f \right) = N \left( X , \Sigma^f \right)$. 
    For convenience, we define
    \begin{equation}
        \mathcal{A}_p \coloneqq \frac{1}{p^2 + p + 1}, \quad \mathcal{B}_p \coloneqq \frac{1+p^{-1/3}}{p^2} \cdot \frac{1-p^{-1/3}}{\left( 1-p^{-5/3} \right) \left(1+p^{-1} \right)}.
    \end{equation}
    For each prime $p$, the set $\Sigma^f_p$ and the values $C_p \left( \Sigma^f_p \right)$, $K_p \left( \Sigma^f_p \right)$, and $E_p \left( \Sigma^f_p \right)$ are given as follows:
    \begin{center}
        \begin{tabular}{l|llll}
             \hline
             $p$ & $\Sigma^f_p$ & $C_p \left( \Sigma^f_p \right)$ & $K_p \left( \Sigma^f_p \right)$ & $E_p \left( \Sigma^f_p \right)$ \\
             \hline
             $p \mid 3c \left( \Sigma \right)$ & $\Sigma_p$ & $C_p \left( \Sigma_p \right)$ & $K_p \left( \Sigma_p \right)$ & $p^{2/3}$ \\
             $p \mid tf$ & $\mathscr{T}_p$ & $\mathcal{A}_p$ &
             $\mathcal{B}_p$ & 1\\
             $p \nmid 3 c \left( \Sigma \right) tf$ & $\mathscr{A}'_p$ & $1- \mathcal{A}_p$ & $1- \mathcal{B}_p$ & 1\\
             \hline
        \end{tabular}
    \end{center}
    Hence, we have
    \begin{align*}
        \prod_p C_p \left( \Sigma^f_p \right)
        &= \prod_{p \mid 3 c \left( \Sigma \right)} C_p \left( \Sigma_p \right) \prod_{p \mid tf} \mathcal{A}_p \prod_{p \nmid 3 c \left( \Sigma \right) tf} \left( 1- \mathcal{A}_p \right)\\
        &= \prod_{p \mid  3 c \left( \Sigma \right)} C_p \left( \Sigma_p \right) \left( 1+ \frac{\mathcal{A}_p}{1- \mathcal{A}_p} \right) \prod_{p \mid tf} \frac{\mathcal{A}_p}{1- \mathcal{A}_p} \prod_p \left( 1- \mathcal{A}_p \right).
    \end{align*}
    Since
    \begin{equation}
        1- \mathcal{A}_p = \frac{1- p^{-2}}{1- p^{-3}} , ~ 
        \frac{\mathcal{A}_p}{1- \mathcal{A}_p} = a_p,
    \end{equation}
    we obtain
    \begin{equation}
        C \left( \Sigma^f \right) =
        C_{\infty} \left( \Sigma_{\infty} \right) \frac{\zeta \left( 3 \right)}{\zeta \left( 2 \right)} a_{tf}
        \prod_{p \mid 3 c \left( \Sigma \right)} C_p \left( \Sigma_p \right) \left( 1+ a_p \right) = \alpha \left( f \right).
    \end{equation}
    Similarly, from
    \begin{equation}
        1- \mathcal{B}_p = \frac{1}{1- p^{-5/3}} \left( 1- \frac{p^{1/3}+1}{p \left( p+1 \right)} \right) , ~ 
        \frac{\mathcal{B}_p}{1- \mathcal{B}_p} = b_p,
    \end{equation}
    we obtain
    \begin{equation*}
        \prod_p K_p \left( \Sigma^f_p \right)
        = \zeta \left( 5/3 \right) b_{tf}
        \prod_p \left( 1- \frac{p^{1/3} +1}{p \left( p+1 \right)} \right)
        \prod_{p \mid 3 c \left( \Sigma \right)} K_p \left( \Sigma_p \right) \left( 1+ b_p \right),
    \end{equation*}
    and thus $K \left( \Sigma^f \right) = \beta \left( f \right)$. Therefore we obtain the main terms of \eqref{pro32}. 

    For primes $p \mid 3 c \left( \Sigma \right) t$, the local specifications $\Sigma^f_p$ do not depend on $f$. For primes $p \nmid 3 c \left( \Sigma \right) t$, the local specification $\Sigma^f_p$ is taking the form $\mathscr{T}_p$ if $p \mid f$, and $\mathscr{A}'_p$ otherwise. Therefore, by Theorem~\ref{btt+a2}, we obtain
    \begin{equation}
        \sum_{f \in \SF \left( \Sigma \right)} \left| E \left( X , f \right) \right|
        = \sum_{f \in \SF \left( \Sigma \right)} \left| E \left( X , \Sigma^f \right) \right|
        \ll_{\varepsilon} X^{2/3 + \varepsilon} \prod_{p \mid 3 c \left( \Sigma \right) t} E \left( \Sigma_p \right)
        \ll_{\varepsilon} {c \left( \Sigma \right)}^{2/3} X^{2/3 + \varepsilon}.
    \end{equation}
\end{proof}

Next, we consider $M' \left( X,f \right)$. 
According to \cite[Lemma~2.1]{tt3}, for a cubic field $F$, 
\begin{equation*}
    \begin{dcases}
        3 \text{ is totally ramified in } F,\\
        d_F \equiv 1 \pmod{3}
    \end{dcases}
    \Longleftrightarrow
    \begin{dcases}
        \disc_3 \left( F_3 \right) = 3^4,\\
        \frac{\disc \left( F_3 \right)}{\disc_3 \left( F_3 \right)} \equiv 1 \pmod{3},
    \end{dcases}
\end{equation*}
where we note that the quotient $\disc \left( F_3 \right)/\disc_3 \left( F_3 \right)$ is well-defined modulo $3$ because $u^2 \equiv 1 \pmod{3}$ for any $u \in \mathbb{Z}_3^{\times}$. 
Let $\mathscr{T}_3^{\left( 1 \right)}$ denote the subset of $\mathscr{A}_3$ consisting of cubic \'{e}tale $\mathbb{Q}_3$-algebras $F_3$ such that $\disc_3 \left( F_3 \right) = 3^4$ and $\disc \left( F_3 \right)/\disc_3 \left( F_3 \right) \equiv 1 \pmod{3}$. According to \cite[Section~6.2]{tt}, $\mathscr{T}_3^{\left( 1 \right)}$ is the set of three totally ramified cubic \'{e}tale $\mathbb{Q}_3$-algebras with the generating polynomials $x^3 - 3 x^2 + 3u$ for $u \in \left\{ 1,4,7 \right\}$. 

\begin{prop}\label{prop2}
    For $\Sigma$-compatible integer $f$, We have
    \begin{equation}
        M' \left( X , f \right) = \alpha' \left( f \right) X + \beta' \left( f \right) X^{5/6} + E' \left( X,f \right),
    \end{equation}
    where
    \begin{align*}
        \alpha' \left( f \right)
        &\coloneqq \frac{\left( 1+a_3 \right) C_{\infty} \left( \Sigma_{\infty} \right) C_3 \left( \Sigma_3 \cap \mathscr{T}_3^{\left( 1 \right)} \right) }{12 \zeta \left( 2 \right)} a_{tf}
        \prod_{p \mid c \left( \Sigma \right)} C_p \left( \Sigma_p \right) \left( 1+ a_p \right),\\
        \beta' \left( f \right)
        &\coloneqq
        \frac{4 \left( 1+ b_3 \right) \zeta \left( 1/3 \right) K_{\infty} \left( \Sigma_{\infty} \right) K_3 \left( \Sigma_3 \cap \mathscr{T}_3^{\left( 1 \right)} \right)}{5 {\Gamma \left( 2/3 \right)}^3} b_{tf}
        \prod_p \left( 1- \frac{p^{1/3} +1}{p \left( p+1 \right)} \right)
        \prod_{p \mid c \left( \Sigma \right)} K_p \left( \Sigma_p \right) \left( 1+ b_p \right),
    \end{align*}
    and for the estimate of the error term $E' \left( X,f \right)$ we have the following bound:
    \begin{equation*}
        \sum_{f \in \SF \left( \Sigma \right)} \left| E' \left( X , f \right) \right| \ll_{\varepsilon} {c \left( \Sigma \right)}^{2/3} X^{2/3 + \varepsilon}.
    \end{equation*}
\end{prop}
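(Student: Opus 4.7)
The plan is to mirror the proof of Proposition~\ref{prop1} with a single modification at the prime $3$. By \cite[Lemma~2.1]{tt3}, the two conditions defining $\mathcal{F}'(f)$ in~\eqref{F'f} beyond $F \in \mathcal{F}(f)$---namely that $3$ is totally ramified in $F$ and that $d_F \equiv 1 \pmod{3}$---together amount to the purely local condition $F_3 \in \mathscr{T}_3^{(1)}$ at the prime $3$. Accordingly I would define $\Sigma^{\prime f} = (\Sigma^{\prime f}_v)_v$ by setting $\Sigma^{\prime f}_v = \Sigma^{f}_v$ (the specification used in the proof of Proposition~\ref{prop1}) for every $v \neq 3$ and $\Sigma^{\prime f}_3 \coloneqq \Sigma_3 \cap \mathscr{T}_3^{(1)}$, so that $M'(X,f) = N(X, \Sigma^{\prime f})$ and Theorem~\ref{btt+a1} applies directly.

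For the main terms, the entire Euler-factor table from the proof of Proposition~\ref{prop1} carries over unchanged at every prime $p \neq 3$. The telescoping identities $1 - \mathcal{A}_p = (1 - p^{-2})/(1 - p^{-3})$, $\mathcal{A}_p/(1 - \mathcal{A}_p) = a_p$, and their $\mathcal{B}_p$-analogues therefore reassemble $\prod_p C_p(\Sigma^{\prime f}_p)$ (and similarly $\prod_p K_p(\Sigma^{\prime f}_p)$) into exactly the same closed form as in Proposition~\ref{prop1}, except that the $p = 3$ local densities $C_3(\Sigma_3)$ and $K_3(\Sigma_3)$ are replaced by $C_3(\Sigma_3 \cap \mathscr{T}_3^{(1)})$ and $K_3(\Sigma_3 \cap \mathscr{T}_3^{(1)})$; pulling the $p = 3$ contribution out of $\prod_{p \mid 3c(\Sigma)}$ then gives precisely the stated formulas for $\alpha'(f)$ and $\beta'(f)$. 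For the error term I would apply Theorem~\ref{btt+a2} with $U = 3c(\Sigma)t$; the common-ramification-type hypothesis is preserved because $\mathscr{T}_3^{(1)}$ consists entirely of totally ramified algebras. Summing yields $\sum_{f \in \SF(\Sigma)} |E(X,\Sigma^{\prime f})| \ll_\varepsilon X^{2/3+\varepsilon} \prod_{p \mid U} E_p(\Sigma^{\prime f}_p)$, which is $\ll_\varepsilon c(\Sigma)^{2/3} X^{2/3+\varepsilon}$ since the factors at $p = 3$ and $p \mid t$ are $\Sigma$-dependent constants while $E_p = O(p^{2/3})$ for $p \mid c(\Sigma)$.

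No genuinely new input beyond the proof of Proposition~\ref{prop1} is required apart from the local translation at $3$ supplied by \cite[Lemma~2.1]{tt3}, so I do not expect any substantive obstacle. The only point requiring real care is the bookkeeping at $p = 3$: the correction factor $1 + a_3$ (respectively $1 + b_3$) arising from the telescoping survives unchanged in $\alpha'(f)$ (respectively $\beta'(f)$), while it is the local density $C_3(\cdot)$ (respectively $K_3(\cdot)$) that gets modified by the intersection with $\mathscr{T}_3^{(1)}$.
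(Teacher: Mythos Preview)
Your proposal is correct and follows essentially the same approach as the paper: define the modified specification $\Sigma^{\prime f}$ (the paper calls it $\Sigma^{3,f}$) by replacing only the $3$-component with $\Sigma_3 \cap \mathscr{T}_3^{(1)}$, then reuse the Euler-factor computation from Proposition~\ref{prop1} verbatim at every $v \neq 3$ and invoke Theorem~\ref{btt+a2} with $U = 3c(\Sigma)t$. Your discussion of the $(1+a_3)$, $(1+b_3)$ bookkeeping is more explicit than the paper's, but the argument is the same.
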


\begin{proof}
    We define $\Sigma^{3,f} = \left( \Sigma^{3,f}_v \right)_v$ by setting
    \begin{equation}
        \Sigma^{3,f}_v \coloneqq
        \begin{dcases}
            \Sigma_v \cap \mathscr{T}_v &\text{if } v \mid tf;\\
            \Sigma_v \cap \mathscr{A}'_v &\text{if } v \nmid 3tf;\\
            \Sigma_v &\text{if } v = \infty;\\
            \Sigma_3 \cap \mathscr{T}_3^{\left( 1 \right)} &\text{if } v = 3.
        \end{dcases}
    \end{equation}
    Then we have $M' \left( X , f \right) = N \left( X , \Sigma^{3,f} \right)$.
    For all $v \neq 3$, we have $\Sigma^{3,f}_v = \Sigma^f_v$. Therefore, as in the previous proposition, $C \left( \Sigma^{3,f} \right)$, $K \left( \Sigma^{3,f} \right)$, and $E \left( \Sigma^{3,f} \right)$ are computed accordingly, and the proof of the first part is complete. 
    
    For primes $p \mid 3 c \left( \Sigma \right) t$, the local specifications $\Sigma^{3,f}_p$ do not depend on $f$. For primes $p \nmid 3 c \left( \Sigma \right) t$, the local specification $\Sigma^{3,f}_p$ is taking the form $\mathscr{T}_p$ if $p \mid tf$, and $\mathscr{A}'_p$ otherwise. Therefore, by Theorem~\ref{btt+a2}, we obtain
    \begin{equation}
        \sum_{f \in \SF \left( \Sigma \right)} \left| E' \left( X , f \right) \right|
        = \sum_{f \in \SF \left( \Sigma \right)} \left| E \left( X , \Sigma^{3,f} \right) \right|
        \ll_{\varepsilon} c \left( \Sigma \right)^{2/3} X^{2/3 + \varepsilon}.
    \end{equation}
\end{proof}

We are now ready to prove our main theorem. We begin with the proof of Theorem~\ref{main}. 
\begin{proof}[Proof of Theorem~\textnormal{\ref{main}}]
    We define $\alpha^{\Sigma}_k$ and $\beta^{\Sigma}_k$ as
    \begin{align*}
        \alpha^{\Sigma}_k &\coloneqq
        \sum_{f \in \SF_{k-l} \left( \Sigma \right)} \left( \alpha \left( f \right)
        - \alpha' \left( f \right) \right)
        + \sum_{f \in \SF_{k-l-1} \left( \Sigma \right) } \alpha' \left( f \right),\\
        \beta^{\Sigma}_k &\coloneqq
        \sum_{f \in \SF_{k-l} \left( \Sigma \right) } \left( \beta \left( f \right)
        - \beta' \left( f \right) \right)
        + \sum_{f \in \SF_{k-l-1} \left( \Sigma \right) } \beta' \left( f \right).
    \end{align*}
    Then by applying Propositions~\ref{prop1} and \ref{prop2} to \eqref{nmmm}, we have
    \begin{equation*}
        N_k \left( X, \Sigma \right) = \alpha^{\Sigma}_k X + \beta^{\Sigma}_k X^{5/6}
        + E_k \left( X , \Sigma \right),
    \end{equation*}
    where
    \begin{equation*}
        E_k \left( X , \Sigma \right)
        \coloneqq
        \sum_{f \in \SF_{k-l} \left( \Sigma \right) } E \left( X , f \right)
        - \sum_{f \in \SF_{k-l} \left( \Sigma \right)} E' \left( X , f \right)
        + \sum_{f \in \SF_{k-l-1} \left( \Sigma \right) } E' \left( X , f \right).
    \end{equation*}
    By Propositions~\ref{prop1} and \ref{prop2}, we have
    \begin{equation*}
        \sum_{k \ge l} \left| E_k \left( X, \Sigma \right) \right|
        \le \sum_{f \in \SF \left( \Sigma \right)} \left| E \left( X , f \right) \right|
        + \sum_{f \in \SF \left( \Sigma \right)} \left| E' \left( X , f \right) \right|
        + \sum_{f \in \SF \left( \Sigma \right)} \left| E' \left( X , f \right) \right|
        \ll {c \left( \Sigma \right)}^{2/3} X^{2/3 + \varepsilon}.
    \end{equation*}
    This completes the proof of Theorem~\ref{main}. 
\end{proof}

The constants $\alpha^{\Sigma}_k$ and $\beta^{\Sigma}_k$ are described as
\begin{align}
    \alpha^{\Sigma}_k
    &= \frac{\left( 1+a_3 \right) C_{\infty} \left( \Sigma_{\infty} \right) }{12 \zeta \left( 2 \right)} a_{t} I_3 \left( k,\Sigma \right)
    \prod_{p \mid c \left( \Sigma \right)} C_p \left( \Sigma_p \right) \left( 1+ a_p \right), \label{mainkeisuu11}\\
    \beta^{\Sigma}_k
    &= \frac{4 \left( 1+ b_3 \right) \zeta \left( 1/3 \right) K_{\infty} \left( \Sigma_{\infty} \right) }{5 {\Gamma \left( 2/3 \right)}^3} b_{t} J_3 \left( k, \Sigma \right)
    \prod_p \left( 1- \frac{p^{1/3} +1}{p \left( p+1 \right)} \right)
    \prod_{p \mid c \left( \Sigma \right)} K_p \left( \Sigma_p \right) \left( 1+ b_p \right), \label{mainkeisuu12}
\end{align}
where
\begin{align*}
    I_3 \left( k,\Sigma \right)
    &= C_3 \left( \Sigma_3 \setminus \mathscr{T}_3^{\left( 1 \right)} \right) A_{k-l} \left( \Sigma \right)
    + C_3 \left( \Sigma_3 \cap \mathscr{T}_3^{\left( 1 \right)} \right) A_{k-l-1} \left( \Sigma \right),\\
    J_3 \left( k,\Sigma \right)
    &= K_3 \left( \Sigma_3 \setminus \mathscr{T}_3^{\left( 1 \right)} \right) B_{k-l} \left( \Sigma \right)
    + K_3 \left( \Sigma_3 \cap \mathscr{T}_3^{\left( 1 \right)} \right) B_{k-l-1} \left( \Sigma \right). 
\end{align*}
Here, for any integer $k \ge 0$, 
\begin{equation*}
    A_k \left( \Sigma \right) \coloneqq \sum_{f \in \SF_k \left( \Sigma \right)} a_f , \quad
    B_k \left( \Sigma \right) \coloneqq \sum_{f \in \SF_k \left( \Sigma \right)} b_f. 
\end{equation*}

If we set $\Sigma_p = \mathscr{A}_p$ for all primes $p$, we obtain Theorem~\ref{mtkairyou1}. 

Finally, we turn to the proof of Theorem~\ref{main2}. 
\begin{proof}[Proof of Theorem~\textnormal{\ref{main2}}]
    Fix $\varepsilon >0$ and a positive function $G \left( X \right) = o \left( \log \log X \right)$, and assume that $z$ satisfies $\Re \left( z \right) \le G \left( X \right)$. 
    We have 
    \begin{align*}
        &\quad \sum_{\substack{\left| \disc \left( F \right) \right|< X \\ F \in \Sigma}} {g_F}^z
        =\sum_{k \ge l} 3^{kz} N_k \left( X,\Sigma \right)\\
        &= \sum_{k \ge l} 3^{kz} \sum_{f \in \SF_{k-l} \left( \Sigma \right) } M \left( X , f \right)
        - \sum_{k \ge l} 3^{kz} \sum_{f \in \SF_{k-l} \left( \Sigma \right) } M' \left( X , f \right)
        + \sum_{k \ge l} 3^{kz} \sum_{f \in \SF_{k-l-1} \left( \Sigma \right) } M' \left( X , f \right)\\
        &= \sum_{f \in \SF \left( \Sigma \right) } 3^{\left( \psi \left( f \right) + l \right)z}
        M \left( X , f \right)
        - \sum_{f \in \SF \left( \Sigma \right) } 3^{\left( \psi \left( f \right) + l \right)z} M' \left( X , f \right)
        + \sum_{f \in \SF \left( \Sigma \right) } 3^{\left( \psi \left( f \right) + l +1 \right)z} M' \left( X , f \right)\\
        &= \sum_{f \in \SF \left( \Sigma \right) } 3^{\left( \psi \left( f \right) + l \right)z} M \left( X , f \right)
        + \left( 3^z -1 \right) \sum_{f \in \SF \left( \Sigma \right) } 3^{\left( \psi \left( f \right) + l \right)z} M' \left( X , f \right)\\
        &= \sum_{\substack{f < \sqrt{X} \\ f \in \SF \left( \Sigma \right) }} 3^{\left( \psi \left( f \right) + l \right)z} M \left( X , f \right)
        + \left( 3^z -1 \right) \sum_{\substack{f < \sqrt{X} \\ f \in \SF \left( \Sigma \right) }} 3^{\left( \psi \left( f \right) + l \right)z} M' \left( X , f \right).
    \end{align*}
    The last equality is justified because $M \left( X , f \right) = M' \left( X , f \right) = 0$ for $f \ge \sqrt{X}$. We introduce this truncation to bound the contribution of the weights $3^{\psi \left( f \right) z}$ in the tail of the series in the above derivation. 
    Defining $\alpha^\Sigma \left( z \right)$ and $\beta^\Sigma \left( z \right)$ as
    \begin{equation*}
        \alpha^\Sigma \left( z \right) \coloneqq
        \sum_{f \in \SF \left( \Sigma \right)} \left( 3^{\left( \psi \left( f \right) + l \right)z} \alpha \left( f \right) +\left( 3^z -1 \right)3^{\left( \psi \left( f \right) + l \right)z} \alpha' \left( f \right)   \right),
    \end{equation*}
    and
    \begin{equation*}
        \beta^{\Sigma} \left( z \right) \coloneqq
        \sum_{f \in \SF \left( \Sigma \right)} \left( 3^{\left( \psi \left( f \right) + l \right)z} \beta \left( f \right) +\left( 3^z -1 \right)3^{\left( \psi \left( f \right) + l \right)z} \beta' \left( f \right) \right),
    \end{equation*}
    respectively, we obtain
    \begin{equation*}
        \sum_{\substack{\left| \disc \left( F \right) \right|< X \\ F \in \Sigma}} {g_F}^z
        = \alpha^\Sigma \left( z \right) X + \beta^\Sigma \left( z \right) X^{5/6}
        - E_1 + E_2,
    \end{equation*}
    where
    \begin{align*}
        E_1
        &\coloneqq
        \begin{aligned}[t]
            \sum_{\substack{f \ge \sqrt{X} \\ f \in \SF \left( \Sigma \right) }} &\left( 3^{\left( \psi \left( f \right) + l \right)z} \alpha \left( f \right) +\left( 3^z -1 \right)3^{\left( \psi \left( f \right) + l \right)z} \alpha' \left( f \right)   \right)X\\
            &+ \sum_{\substack{f \ge \sqrt{X} \\ f \in \SF \left( \Sigma \right) }} \left( 3^{\left( \psi \left( f \right) + l \right)z} \beta \left( f \right) +\left( 3^z -1 \right)3^{\left( \psi \left( f \right) + l \right)z} \beta' \left( f \right) \right)X^{5/6},
        \end{aligned}
        \\
        E_2
        &\coloneqq
        \sum_{\substack{f < \sqrt{X} \\ f \in \SF \left( \Sigma \right) }} 3^{\left( \psi \left( f \right) + l \right)z} E \left( X , f \right) + \sum_{\substack{f < \sqrt{X} \\ f \in \SF \left( \Sigma \right) }} \left( 3^z -1 \right)3^{\left( \psi \left( f \right) + l \right)z} E' \left( X, f \right).
    \end{align*}
    According to \cite[Th\'{e}or\`{e}me~11]{Rob}, for $f \ge 3$, 
    \begin{equation}
        \psi \left( f \right) \le \omega \left( f \right) \le 1.38402 \cdot \frac{\log f}{\log {\log f}}.
    \end{equation}
    And, the statement $G \left( X \right) = o \left( \log {\log X} \right)$ implies that for any $\varepsilon > 0$, there exists a constant $C_{G,\varepsilon}> e^{2e}$ (depending on $G$ and $\varepsilon$) such that 
    \begin{equation*}
        1.38402 \cdot \log 3 \cdot \frac{G \left( X \right)}{\log {\log {\sqrt{X}}}} < \varepsilon
    \end{equation*}
    for all $X > C_{G,\varepsilon}$. Hereinafter, we assume $X > C_{G,\varepsilon}$. 
    
    Since $\alpha \left( f \right) , \beta \left( f \right) , \alpha' \left( f \right), \beta' \left( f \right)$
    $ \ll f^{-2}$, we obtain
    \begin{align*}
        E_1 &\ll X \sum_{f \ge \sqrt{X}} f^{-2} 3^{\left( \psi \left( f \right) +l +1 \right) G \left( X \right)}\\
        &= 3^{\left( l +1 \right) G \left( X \right)} X \sum_{f \ge \sqrt{X}} f^{-2} \exp \left( 1.38402 \cdot \log 3 \cdot \frac{G \left( X \right) \log f}{\log \log f} \right)\\
        &\le 3^{\left( l +1 \right) G \left( X \right)} X \sum_{f \ge \sqrt{X}} f^{-2} \exp \left( 1.38402 \cdot \log 3 \cdot \frac{G \left( X \right) \log f}{\log \log \sqrt{X}} \right)\\
        &\le 3^{\left( l +1 \right) G \left( X \right)} X \sum_{f \ge \sqrt{X}} f^{-2} \exp \left( \varepsilon \log f \right)\\
        &= 3^{\left( l +1 \right) G \left( X \right)} X \sum_{f \ge \sqrt{X}} f^{-2+ \varepsilon}\\
        &\ll 3^{l G \left( X \right)} X^{1/2 + \varepsilon/2}
    \end{align*}
        Since the function $\frac{\log x}{\log \log x}$ is monotonically increasing for $x \ge e^e$, the inequality
    \begin{equation*}
        \psi \left( f \right) \le 1.38402 \cdot \frac{\log \sqrt{X}}{\log {\log \sqrt{X}}}
    \end{equation*}
    holds for $e^e \le f < \sqrt{X}$. For the case $f < e^e$, we obtain the same estimate from the fact that $\psi \left( f \right) \le 2$. 
    Thus, we have
    \begin{align*}
        E_2 &\ll \sum_{\substack{f < \sqrt{X} \\ f \in \SF \left( \Sigma \right) }} 3^{\left( \psi \left( f \right) +l +1 \right) \Re \left( z \right)} 
        \left( E \left( X, f \right) + E' \left( X, f \right) \right)\\
        &\ll 3^{\left( l +1 \right) G \left( X \right)} 
        \exp \left( 1.38402 \cdot \log 3 \cdot \frac{G \left( X \right) \log \sqrt{X}}{\log {\log \sqrt{X}}} \right) \sum_{\substack{f < \sqrt{X} \\ f \in \SF \left( \Sigma \right) }} 
        \left( E \left( X, f \right) + E' \left( X, f \right) \right)\\
        &\ll_\varepsilon 3^{l G \left( X \right)} 
        \exp \left( \frac{\varepsilon}{2} \log X \right) {c \left( \Sigma \right)}^{2/3} X^{2/3 + \varepsilon /2}\\
        &= 3^{l G \left( X \right)} {c \left( \Sigma \right)}^{2/3} X^{2/3 + \varepsilon}.
    \end{align*}
    This completes the proof of Theorem~\ref{main2}. 
\end{proof}

Under suitable convergence assumptions, the identity
\begin{equation*}
    \sum_{f} 3^{\left( \psi \left( f \right) + l \right)z} \prod_{p \mid f} \varphi \left( p \right)
    = 3^{lz} \sum_f \prod_{p \mid f} 3^{\psi \left( p \right) z} \varphi \left( p \right)
    = 3^{lz} \prod_{p \in \mathcal{P}} \left( 1+ 3^{\psi \left( p \right) z} \varphi \left( p \right) \right)
\end{equation*}
holds in general, and in particular we can easily verified that the constants $\xi_{z,\Sigma}$ and $\eta_{z,\Sigma}$ defined in the statement can be expressed as
\begin{align}
    \alpha^{\Sigma} \left( z \right) &\coloneqq
    \begin{aligned}[t]
        \frac{3^{lz} \cdot C_{\infty} \left( \Sigma_{\infty} \right)}{12 \zeta \left( 2 \right)}a_t
        &\left( C_3 \left( \Sigma_3 \right) + \left( 3^z -1 \right) C_3 \left( \Sigma_3 \cap \mathscr{T}_3^{\left( 1 \right)} \right) \right)\\
        &\quad\quad\quad\quad\quad 
        \times \prod_{p \mid c \left( \Sigma \right)} C_p \left( \Sigma_p \right) \left( 1+ a_p \right)
        \prod_{p \in \mathcal{P} \cup \left\{ 3 \right\}}
        \left( 1 + {n_p}^z a_p \right),
    \end{aligned} \label{mainkeisuu21}
    \\
    \beta^{\Sigma} \left( z \right) &\coloneqq 
    \begin{aligned}[t]
        \frac{3^{lz} \cdot 4 \zeta \left( 1/3 \right) K_{\infty} \left( \Sigma_{\infty} \right)}{5 {\Gamma \left( 2/3 \right)}^3} b_t
        &\prod_p \left( 1- \frac{p^{1/3} +1}{p \left( p+1 \right)} \right)
        \left( K_3 \left( \Sigma_3 \right) + \left( 3^z -1 \right) K_3 \left( \Sigma_3 \cap \mathscr{T}_3^{\left( 1 \right)} \right) \right)\\
        &\quad \times \prod_{p \mid c \left( \Sigma \right)} K_p \left( \Sigma_p \right) \left( 1+ b_p \right) 
        \prod_{p \in \mathcal{P} \cup \left\{ 3 \right\}}
        \left( 1 + {n_p}^z b_p \right).
    \end{aligned} \label{mainkeisuu22}
\end{align}

If we set $\Sigma_p = \mathscr{A}_p$ for all primes $p$ and $z=1$, we obtain Theorem~\ref{mtkairyou3}. 

\section*{Acknowledgements}
The author would like to express his sincere gratitude to his supervisor, Prof. Takashi Taniguchi, for his constant guidance and many valuable suggestions. The author is also deeply grateful to Prof. Frank Thorne for his insightful comments and suggestions on an earlier version of this paper. 

\printbibliography

\end{document}